\newtheorem{theorem}{Theorem}	
\newtheorem{lemma}{Lemma}[section]		
\newtheorem{corollary}{Corollary}		
\newtheorem{proposition}{Proposition}		
\newtheorem{definition}{Definition}
\title[Maximum and minimum of support functions]
{
Maximum and minimum of support functions 
}
\author{
Huhe Han}
\address{College of Science, Northwest Agriculture and Forestry University, China}
\email{han-huhe@nwafu.edu.cn}
\begin{document}
\begin{abstract}
For given continuous functions $\gamma_{{}_{i}}: S^{n}\to \mathbb{R}_{+}$  (where $i=1, 2$),
the functions $\gamma_{{}_{max}}$ and $\gamma_{{}_{min}}$ 
can be defined as natural way. 
In this paper, we show that the Wulff shape associated to  
$\gamma_{{}_{max}}$ is the convex hull of the union of Wulff shapes associated to $\gamma_{{}_1}$ and $\gamma_{{}_2}$ , if $\gamma_{{}_1}$ and $\gamma_{{}_2}$ are convex integrands. 
And, the Wulff shape associated to  
$\gamma_{{}_{min}}$ is the intersection of Wulff shapes associated to $\gamma_{{}_1}$ and $\gamma_{{}_2}$. 
Moreover, relationships between their dual Wulff shapes are given.
\end{abstract}
\subjclass[2010]{\color{black}52A20, 52A55, 82D25} 
\keywords{support function, convex integrand, maximum, minimum, Wulff shape.  
} 
\maketitle  
\section{Introduction}\label{section 1}
\par
Let $n$ be a positive integer. 
Given a continuous function $\gamma: S^n\to \mathbb{R}_+$,  
where $S^n$ is the unit sphere in $\mathbb{R}^{n+1}$ and 
$\mathbb{R}_+$ is the set consisting of positive real numbers,    
the {\it Wulff shape} associated associated with the support function $\gamma$, denoted by 
$\mathcal{W}_\gamma$, is the following intersection 
(see Figure \ref{wulffshape}), 
\[
\mathcal{W}_\gamma=\bigcap_{\theta\in S^n}\Gamma_{\gamma, \theta}.  
\]
Here, $\Gamma_{\gamma, \theta}$ is the following half-space:   
\[
\Gamma_{\gamma, \theta}=\{x\in \mathbb{R}^{n+1}\; |\; x\cdot \theta\le \gamma(\theta)\}, 
\]    
where the dot in the center stands for the dot product of two vectors 
$x, \theta\in \mathbb{R}^{n+1}$.     
\begin{figure}[htbp]
  \begin{center}
    \includegraphics[clip,width=4.0cm]{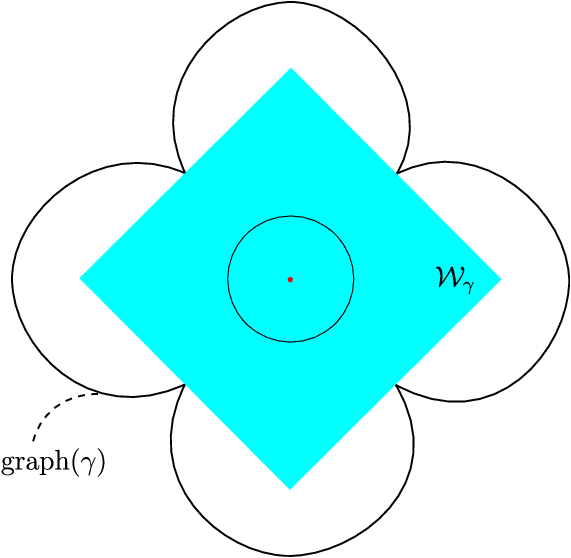}
    \caption{A Wulff shape $\mathcal{W}_{\gamma}$.}
    \label{wulffshape}
  \end{center}
\end{figure}
This construction is well-known as Wulff's construction of an equilibrium  crystal  introduced 
by G.~Wulff in \cite{wulff}      
(for details on Wulff shapes, see for instance \cite{giga, morgan, crystalbook, taylor, taylor2}). 
By definition, a Wulff shape is convex, compact and it contains the origin of $\mathbb{R}^{n+1}$ as an interior point. 
Conversely, it has been known that any convex body $W$ in $\mathbb{R}^{n+1}$ containing the origin as an interior point is a Wulff shape associated with an appropriate support function, namely, 
there exists a continuous function $\gamma : S^n \to \mathbb{R}_+$ such that $W = \mathcal{W}_{\gamma}$ 
(\cite{taylor}).
\par 
For a continuous function 
$\gamma: S^n\to \mathbb{R}_+$,    
set 
\[
\mbox{graph}(\gamma)=\{(\theta, \gamma(\theta))\in \mathbb{R}^{n+1}-\{0\}\; |\; \theta\in S^n\}, 
\]
where $(\theta, \gamma(\theta))$ is the polar plot expression for a point of $\mathbb{R}^{n+1}-\{0\}$.   
The mapping 
$\mbox{inv} : \mathbb{R}^{n+1}-\{0\}\to \mathbb{R}^{n+1}-\{0\}$, defined as follows, is called the {\it inversion} with respect to 
the origin of $\mathbb{R}^{n+1}$.     
\[
\mbox{inv}(\theta, r)=\left(-\theta, \frac{1}{r}\right). 
\]
Let $\Gamma_\gamma$ be the boundary of the convex hull of 
$\mbox{inv}(\mbox{graph}(\gamma))$.  
If the equality 
$\Gamma_\gamma=\mbox{inv}(\mbox{graph}(\gamma))$ is satisfied, then 
$\gamma$ is called a \textit{convex integrand} (see Figure \ref{convexintegrand}). 
\begin{figure}[htbp]
  \begin{center}
    \includegraphics[clip,width=5.0cm]{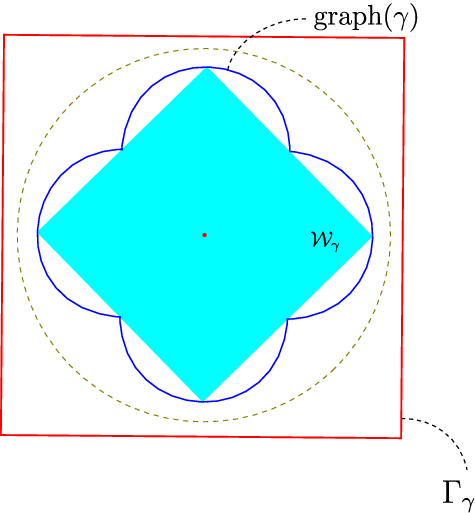}
    \caption{A convex integrand and its inversion.}
    \label{convexintegrand}
  \end{center}
\end{figure}
The notion of convex integrand was first introduced by J.~Taylor in \cite{taylor} 
and  it plays a key role for studying Wulff shapes (for details on convex integrands, 
see for instance \cite{ hannishimura2, morgan}).   
For given support functions $\gamma_{{}_{1}}, \gamma_{{}_{2}}$,  
define $\gamma_{{}_{max}}$ and $\gamma_{{}_{min}}$ as follows.
\[
\gamma_{{}_{max}}: S^{n}\to \mathbb{R}_{+}, \gamma_{{}_{max}}(\theta)={\rm max}\{\gamma_{{}_{1}}(\theta), \gamma_{{}_{2}}(\theta)\}.
\]
\[
\gamma_{{}_{min}}: S^{n}\to \mathbb{R}_{+}, \gamma_{{}_{min}}(\theta)={\rm min}\{\gamma_{{}_{1}}(\theta), \gamma_{{}_{2}}(\theta)\}.
\]
Then the question naturally arises 
``What are the relationships between 
$\mathcal{W}_{\gamma_{{}_{1}}}, \mathcal{W}_{\gamma_{{}_{2}}}$ and $\mathcal{W}_{\gamma_{{}_{max}}}$ (or $\mathcal{W}_{\gamma_{{}_{min}}}$)? How they are related ?''. 
The main results of this paper are as follows (see Figure \ref{figure 3}).
\begin{theorem}\label{maximumoperator}
Let $\gamma_{{}_{1}}$ and $\gamma_{{}_{2}}$ be convex integrands. 
Then the following holds:
\[
\mathcal{W}_{\gamma_{{}_{max}}}={\rm convex\ hull\ of\ the}\ (\mathcal{W}_{\gamma_{{}_{1}}}\cup \mathcal{W}_{\gamma_{{}_{2}}}).
\]
\end{theorem} 
\begin{theorem}\label{minimumoperator}
Let $\gamma_{{}_{1}}$ and $\gamma_{{}_{2}}$ be support(continuuos) functions. 
Then the following holds:
\[
\mathcal{W}_{\gamma_{{}_{min}}}= \mathcal{W}_{\gamma_{{}_{1}}}\cap \mathcal{W}_{\gamma_{{}_{2}}}.
\]
\end{theorem} 

\begin{figure}[htbp]
  \begin{center}
  \begin{minipage}{.45\linewidth} 
	\begin{center}
\includegraphics[width=0.8 \linewidth]{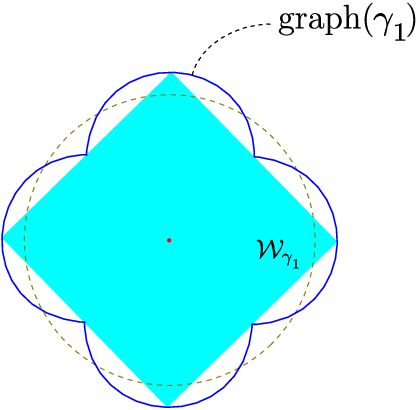} 
	\end{center}
  \end{minipage}
 \hspace{2.5pc} 
  \begin{minipage}{.45\linewidth} 
  	\begin{center}
\includegraphics[width=0.8 \linewidth]{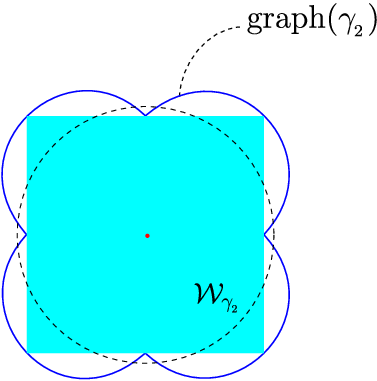} 
	\end{center}
  \end{minipage}
  \end{center}
\vspace{10mm}
  \begin{center}
  \begin{minipage}{.45\linewidth} 
  \begin{center}
  \includegraphics[width=0.9 \linewidth]{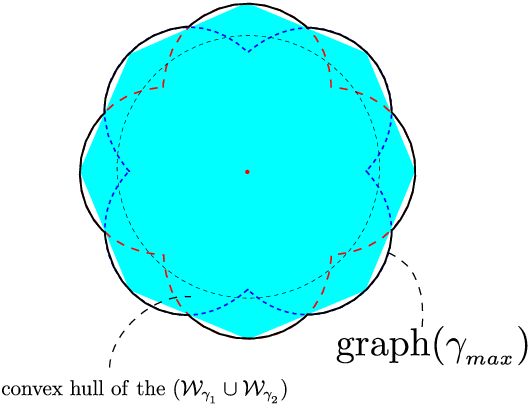} 
	\end{center}
  \end{minipage}
  \hspace{2.0pc} 
  \begin{minipage}{.45\linewidth} 
  \begin{center}
 \includegraphics[width=0.9 \linewidth]{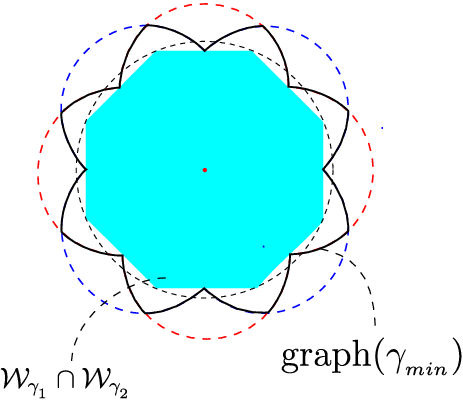} 
	\end{center}
  \end{minipage}
  \end{center}
  \caption{An illustration of Theorem \ref{maximumoperator},  \ref{minimumoperator}.    
Left top : a Wulff shape $\mathcal{W}_{\gamma_{{}_{1}}}$, right top : a Wulff shape $\mathcal{W}_{\gamma_{{}_{2}}}$, left bottom :  the Wulff shape $\mathcal{W}_{\gamma_{{}_{max}}}$, right bottom : the Wulff shape $\mathcal{W}_{\gamma_{{}_{min}}}$.}
  \label{figure 3}
  \end{figure}
\par 
\medskip 
This paper is organized as follows. In section 2, the preliminaries are given, 
and the proof of Theorem \ref{maximumoperator}, Theorem \ref{minimumoperator} and related topics are given
in section 3, 4 and section 5 respectively.


\section{Preliminaries}\label{section 2}
\subsection{Spherical convex body}
For any point $\widetilde{P} \in S^{n+1}$, let $H(\widetilde{P})$
be the hemisphere centered at $\widetilde{P}$,
\[
H(\widetilde{P})=\{\widetilde{Q}\in S^{n+1}\mid \widetilde{P}\cdot \widetilde{Q}\geq 0\},
\]
where the dot in the center stands for the scalar product of two vectors 
$\widetilde{P}, \widetilde{Q}\in \mathbb{R}^{n+2}$.     
For any {\color{black}non-empty} subset $\widetilde{W}\subset S^{n+1}$, the {\it spherical polar set of $\widetilde{W}$}, denoted by 
$\widetilde{W}^\circ$, is defined as follows: 
\[
\widetilde{W}^\circ = \bigcap_{\widetilde{P}\in \widetilde{W}}H(\widetilde{P}).
\]   
\begin{definition}[\cite{nishimurasakemi2}]\label{definition 1}
\rm{Let $\widetilde{W}$ be a subset of $S^{n+1}$.     
Suppose that there exists a point $\widetilde{P}\in S^{n+1}$ such that 
$\widetilde{W}\cap H(\widetilde{P})=\emptyset$. 
Then, $\widetilde{W}$ is said to be 
{\it hemispherical}.      
}
\end{definition}
{\color{black}
Let $\widetilde{P}, \widetilde{Q}$ be two points of $S^{n+1}$ such that $(1-t)\widetilde{P}+t\widetilde{Q}$ is not the zero vector 
for any $t\in [0,1]$. 
Then, 
the following arc is denoted by $\widetilde{P}\widetilde{Q}$:  
\[
\widetilde{P}\widetilde{Q}=\left\{\left.\frac{(1-t)\widetilde{P}+t\widetilde{Q}}{||(1-t)\widetilde{P}+t\widetilde{Q}||}\in S^{n+1}\; \right|\; 0\le t\le 1\right\}.  
\]
}
\begin{definition}[\cite{nishimurasakemi2}] 
\label{definition 2.2}
{\rm 
Let $\widetilde{W}\subset S^{n+1}$ be a hemispherical subset.   
\begin{enumerate}
\item Suppose that 
$\widetilde{P}\widetilde{Q}\subset \widetilde{W}$ for any 
$\widetilde{P}, \widetilde{Q}\in \widetilde{W}$. 
Then,  
$\widetilde{W}$ is said to be {\it spherical convex}.   
{\color{black}
\item Suppose that $\widetilde{W}$ is closed, spherical convex and has an interior point.   
Then,  
$\widetilde{W}$ is said to be a {\it spherical convex body}.   
}
\end{enumerate}
}
\end{definition}  
\begin{definition}[\cite{nishimurasakemi2}]\label{definition 2.2}
{\rm Let $\widetilde{W}$ be a hemispherical subset of $S^{n+1}$.     
Then, the following set, denoted by 
$\mbox{\rm s-conv}({\color{black}\widetilde{W}})$, is called the {\it spherical convex hull of} 
${\color{black}\widetilde{W}}$.   
\[
\mbox{\rm s-conv}(\widetilde{W})= 
\left\{\left.
\frac{\sum_{i=1}^k t_i\widetilde{P}_i}{||\sum_{i=1}^kt_i\widetilde{P}_i||}\;\right|\; 
\widetilde{P}_i\in {\color{black}\widetilde{W}},\; \sum_{i=1}^kt_i=1,\; t_i\ge 0, k\in \mathbb{N}
\right\}.
\] 
}
\end{definition}
\begin{lemma}[\cite{nishimurasakemi2}]\label{smallestlem}
For any hemispherical subset $\widetilde{W}$ of $S^{n+1}$, the spherical convex hull of $\widetilde{W}$ is the smallest spherical convex set containing $\widetilde{W}$. 
\end{lemma}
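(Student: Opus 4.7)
The plan is to prove two things: that $\mbox{s-conv}(\widetilde{W})$ is itself a spherical convex set containing $\widetilde{W}$, and that every spherical convex set containing $\widetilde{W}$ must also contain $\mbox{s-conv}(\widetilde{W})$. The containment $\widetilde{W}\subset \mbox{s-conv}(\widetilde{W})$ is immediate by taking $k=1$ in the definition. Before anything else I have to check well-definedness: since $\widetilde{W}$ is hemispherical, I can pick $\widetilde{P}^{*}\in S^{n+1}$ with $\widetilde{P}\cdot \widetilde{P}^{*}<0$ for every $\widetilde{P}\in \widetilde{W}$, and then any combination $\sum t_{i}\widetilde{P}_{i}$ with $t_{i}\ge 0$ and $\sum t_{i}=1$ has strictly negative inner product with $\widetilde{P}^{*}$, hence is nonzero. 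This simultaneously shows that $\mbox{s-conv}(\widetilde{W})$ is contained in the open hemisphere opposite $\widetilde{P}^{*}$, so it is again hemispherical, and guarantees that every arc produced below is well-defined.

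For spherical convexity of $\mbox{s-conv}(\widetilde{W})$, I would take two points $\widetilde{Q}_{1}, \widetilde{Q}_{2}$ of $\mbox{s-conv}(\widetilde{W})$ with explicit normalized representations and substitute them into the arc formula $\frac{(1-t)\widetilde{Q}_{1}+t\widetilde{Q}_{2}}{||(1-t)\widetilde{Q}_{1}+t\widetilde{Q}_{2}||}$; after absorbing the norms of the two representations into the coefficients and rescaling so the new coefficients sum to one, every point of the arc $\widetilde{Q}_{1}\widetilde{Q}_{2}$ is exhibited as a normalized convex combination of the original points of $\widetilde{W}$, hence lies in $\mbox{s-conv}(\widetilde{W})$. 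For minimality, let $\widetilde{V}$ be any spherical convex set with $\widetilde{W}\subset \widetilde{V}$, and argue by induction on $k$ that every normalized convex combination $\frac{\sum_{i=1}^{k}t_{i}\widetilde{P}_{i}}{||\sum_{i=1}^{k}t_{i}\widetilde{P}_{i}||}$ of points of $\widetilde{W}$ lies in $\widetilde{V}$. The base $k=1$ is trivial. In the inductive step I may assume $0<t_{k}<1$ (otherwise reduce the index set), set $\widetilde{Q}$ equal to the normalization of $\sum_{i<k}(t_{i}/(1-t_{k}))\widetilde{P}_{i}$, which lies in $\widetilde{V}$ by the inductive hypothesis, and then check that the target point lies on the arc $\widetilde{Q}\widetilde{P}_{k}$; spherical convexity of $\widetilde{V}$ finishes the step.

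The main obstacle will be the bookkeeping common to both halves: each argument reduces to recognizing that a normalized positive linear combination of normalized convex combinations is itself a normalized convex combination, but writing this out requires tracking two competing normalizations and choosing the arc parameter $t\in[0,1]$ that reconciles them. The hemisphericity of $\widetilde{W}$ is precisely what keeps every denominator strictly positive throughout these manipulations, so the inductive rewriting never breaks down.
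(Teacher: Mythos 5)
Your argument is correct, but note that the paper itself offers no proof of this lemma: it is quoted verbatim from the reference [Nishimura--Sakemi, \emph{Topological aspect of Wulff shapes}], so there is no in-paper proof to compare against. Your proposal supplies exactly the standard argument one would expect that reference to contain: the well-definedness check (hemisphericity gives a $\widetilde{P}^{*}$ with $\widetilde{P}\cdot\widetilde{P}^{*}<0$ on $\widetilde{W}$, hence every nonnegative combination summing to $1$ is nonzero and the hull stays in the open hemisphere opposite $\widetilde{P}^{*}$), the closure of $\mbox{s-conv}(\widetilde{W})$ under arcs by absorbing the two normalizing constants into the coefficients and rescaling to sum to $1$, and the minimality via induction on $k$ by splitting off $\widetilde{P}_{k}$ and locating the target point on the arc $\widetilde{Q}\widetilde{P}_{k}$ at the parameter value $s=t_{k}/\bigl(t_{k}+(1-t_{k})\,\|\sum_{i<k}(t_{i}/(1-t_{k}))\widetilde{P}_{i}\|\bigr)$. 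All denominators are indeed controlled by the hemisphericity observation, as you note, and the degenerate cases ($t_{k}=0$ or $t_{k}=1$, antipodal endpoints) are correctly excluded. The proof is complete and self-contained; it turns a black-box citation into an elementary verification, at the cost only of the normalization bookkeeping you flag.
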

\begin{lemma}[\cite{nishimurasakemi2}]\label{inclusionlem}
Let $\widetilde{W}_{1}, \widetilde{W}_{2} \subset S^{n+1}$. 
If $\widetilde{W}_{1}$ is a subset of 
$\widetilde{W}_{2}$,  
then $\widetilde{W}_{2}^{\circ}$ is a subset of $\widetilde{W}_{1}^{\circ}$.
\end{lemma}
\begin{lemma}[\cite{nishimurasakemi2}]\label{inclusionitself}
For any subset $\widetilde{W}$ of $S^{n+1}$, the inclusion $\widetilde{W}\subset \widetilde{W}^{\circ \circ}$ holds.
\end{lemma}

The following proposition has been known.
\begin{proposition}[\cite{nishimurasakemi2}]\label{proposition 2.1}
For any non-empty closed hemispherical subset $\widetilde{W}\subset S^{n+1}$,    
the equality $\mbox{\rm s-conv}(\widetilde{W})=(\mbox{\rm s-conv}(\widetilde{W}))^{\circ\circ}$ holds.

\end{proposition}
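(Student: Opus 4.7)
The plan is to prove the two inclusions separately. For brevity, write $\widetilde{C}=\mbox{\rm s-conv}(\widetilde{W})$.

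The inclusion $\widetilde{C}\subset \widetilde{C}^{\circ\circ}$ is immediate: it is exactly Lemma \ref{inclusionitself} applied to the set $\widetilde{C}$ itself, so nothing new has to be proved here.

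The content of the proposition lies in the reverse inclusion $\widetilde{C}^{\circ\circ}\subset \widetilde{C}$. The approach I would take is a spherical separation argument modeled on the classical bipolar theorem. First I would verify that $\widetilde{C}$ is a non-empty, closed, hemispherical, spherical convex subset of $S^{n+1}$: spherical convexity is built into the definition of $\mbox{\rm s-conv}$, hemisphericality is inherited from $\widetilde{W}$ because spherical convex hulls of hemispherical sets stay inside some open hemisphere, and closedness must be checked from the fact that $\widetilde{W}$ is closed (using the explicit formula for $\mbox{\rm s-conv}$ as normalized finite convex combinations, together with a compactness argument in the ambient $\mathbb{R}^{n+2}$). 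Then I would argue by contrapositive: take any $\widetilde{Q}\in S^{n+1}$ with $\widetilde{Q}\notin \widetilde{C}$, and produce a supporting point $\widetilde{P}\in S^{n+1}$ such that $\widetilde{C}\subset H(\widetilde{P})$ while $\widetilde{Q}\notin H(\widetilde{P})$. Concretely, one lifts $\widetilde{C}$ to the positive cone $\mathrm{cone}(\widetilde{C})\subset \mathbb{R}^{n+2}$, which is a closed convex cone not containing $\widetilde{Q}$, applies the standard hyperplane-separation theorem in $\mathbb{R}^{n+2}$ to separate $\widetilde{Q}$ from $\mathrm{cone}(\widetilde{C})$ by a hyperplane through the origin, and takes $\widetilde{P}$ to be the unit normal on the side of $\mathrm{cone}(\widetilde{C})$.

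Once such a $\widetilde{P}$ is produced, the rest is formal. By construction $\widetilde{P}\cdot \widetilde{R}\ge 0$ for every $\widetilde{R}\in \widetilde{C}$, i.e.\ $\widetilde{C}\subset H(\widetilde{P})$, which by the definition of the spherical polar means exactly $\widetilde{P}\in \widetilde{C}^{\circ}$. On the other hand $\widetilde{P}\cdot \widetilde{Q}<0$, so $\widetilde{Q}\notin H(\widetilde{P})$, and since $\widetilde{P}\in \widetilde{C}^{\circ}$ the intersection $\bigcap_{\widetilde{P}'\in \widetilde{C}^{\circ}}H(\widetilde{P}')$ that defines $\widetilde{C}^{\circ\circ}$ cannot contain $\widetilde{Q}$. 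Hence $\widetilde{Q}\notin \widetilde{C}^{\circ\circ}$, finishing the reverse inclusion.

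The main obstacle I anticipate is the separation step: on the sphere one does not have the affine Hahn--Banach theorem directly, and the hemispherical hypothesis on $\widetilde{W}$ is used precisely to guarantee that the lifted cone $\mathrm{cone}(\widetilde{C})$ is pointed and closed (so that separation in $\mathbb{R}^{n+2}$ can be carried out cleanly and the separating functional descends to a well-defined $\widetilde{P}\in S^{n+1}$). A secondary, more technical point is verifying that $\mbox{\rm s-conv}(\widetilde{W})$ is actually closed when $\widetilde{W}$ is closed; this is where the compactness of $\widetilde{W}$ inside an open hemisphere is needed, because the normalization by $\|\sum t_i\widetilde{P}_i\|$ is only well-behaved away from zero.
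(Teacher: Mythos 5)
The paper does not actually prove Proposition \ref{proposition 2.1}; it imports the statement from \cite{nishimurasakemi2} without argument, so there is no in-paper proof to compare against. Judged on its own, your proposal is correct and is essentially the standard bipolar argument that the cited reference also relies on: the easy inclusion from Lemma \ref{inclusionitself}, and the reverse inclusion by lifting $\widetilde{C}=\mbox{\rm s-conv}(\widetilde{W})$ to the cone $\mathrm{cone}(\widetilde{C})\subset\mathbb{R}^{n+2}$ and separating a point $\widetilde{Q}\notin\widetilde{C}$ from that closed convex cone by a hyperplane through the origin (note $\widetilde{Q}\notin\mathrm{cone}(\widetilde{C})$ because the only unit vectors in the cone are points of $\widetilde{C}$). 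Two small points deserve to be made explicit. First, for the closedness of $\mbox{\rm s-conv}(\widetilde{W})$ you need Carath\'eodory's theorem to cap the number $k$ of terms at $n+3$; only then is $\mbox{\rm s-conv}(\widetilde{W})$ the continuous image of a compact parameter set, and the hemisphericality of the closed set $\widetilde{W}$ gives a uniform bound $\bigl\|\sum t_i\widetilde{P}_i\bigr\|\ge\delta>0$ keeping the normalization well behaved. Second, the separation you invoke is the conic version (closest-point projection onto the closed convex cone yields $\widetilde{P}$ with $\widetilde{P}\cdot x\ge 0$ on the cone and $\widetilde{P}\cdot\widetilde{Q}<0$), not the affine Hahn--Banach statement; pointedness of the cone is not actually needed, only closedness and convexity. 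With those details filled in, the argument is complete and self-contained, which is arguably more than the paper itself provides.
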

\begin{lemma}[Maehara's lemma (\cite{maehara, nishimurasakemi2})]\label{maeharalemma}
For any hemispherical finite subset $\widetilde{W}=\{\widetilde{P}_1, \ldots, \widetilde{P}_k\}\subset S^{n+1}$, the following holds:
$$
\left\{\left.
\frac{\sum_{i=1}^k t_i\widetilde{P}_i}{||\sum_{i=1}^kt_i\widetilde{P}_i||}\;\right|\; 
\widetilde{P}_i\in X,\; \sum_{i=1}^kt_i=1,\; t_i\ge 0
\right\}^\circ 
= 
H(\widetilde{P}_1)\cap \cdots \cap H(\widetilde{P}_k).
$$
\label{lemma 2.5}
\end{lemma}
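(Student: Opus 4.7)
The plan is to prove the two set inclusions separately, using the fact that the spherical polar operation is defined via non-negative dot products. Write $A$ for the set on the left-hand side, i.e.\ $A=\mbox{\rm s-conv}(\widetilde{W})$ according to Definition \ref{definition 2.2}. Before starting the inclusions, I would first record the essential consequence of hemisphericity: since $\widetilde{W}$ is hemispherical, there is some $\widetilde{P}_0\in S^{n+1}$ with $\widetilde{P}_0\cdot\widetilde{P}_i<0$ for every $i$, so for any coefficients $t_i\ge 0$ with $\sum t_i=1$ the dot product $\widetilde{P}_0\cdot\sum_i t_i\widetilde{P}_i$ is strictly negative, which implies $\sum_i t_i\widetilde{P}_i\ne 0$. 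Hence the normalization in the definition of $A$ is always well-defined, and in particular each $\widetilde{P}_i$ itself lies in $A$ (take $t_i=1$ and the other coefficients $0$).

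For the inclusion $A^\circ\subset H(\widetilde{P}_1)\cap\cdots\cap H(\widetilde{P}_k)$, I would take $\widetilde{Q}\in A^\circ$ and use $\widetilde{P}_i\in A$ from the previous paragraph: by the definition of the spherical polar set, $\widetilde{Q}\cdot \widetilde{P}_i\ge 0$ for every $i$, which is exactly the statement that $\widetilde{Q}\in H(\widetilde{P}_1)\cap\cdots\cap H(\widetilde{P}_k)$.

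For the reverse inclusion, I would pick $\widetilde{Q}\in H(\widetilde{P}_1)\cap\cdots\cap H(\widetilde{P}_k)$ and an arbitrary point $\widetilde{P}\in A$ written as $\widetilde{P}=\sum_i t_i\widetilde{P}_i/\|\sum_i t_i\widetilde{P}_i\|$ with $t_i\ge 0$ and $\sum t_i=1$. The key computation is then simply
\[
\widetilde{Q}\cdot\widetilde{P}\;=\;\frac{\sum_{i=1}^{k} t_i(\widetilde{Q}\cdot\widetilde{P}_i)}{\bigl\|\sum_{i=1}^{k} t_i\widetilde{P}_i\bigr\|},
\]
where the denominator is strictly positive by the hemisphericity observation above and every summand in the numerator is non-negative because $t_i\ge 0$ and $\widetilde{Q}\cdot \widetilde{P}_i\ge 0$. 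Thus $\widetilde{Q}\cdot\widetilde{P}\ge 0$, and since $\widetilde{P}\in A$ was arbitrary, we conclude $\widetilde{Q}\in A^\circ$.

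I do not expect a serious obstacle here: the statement is essentially a linearity argument, and once the normalization is known to be valid on hemispherical sets the two inclusions are immediate. The only delicate point worth stating carefully is the non-vanishing of $\sum_i t_i\widetilde{P}_i$, because without it the expression for a point of $A$ would be undefined and both inclusions could fail; this is precisely why hemisphericity is in the hypothesis rather than mere finiteness.
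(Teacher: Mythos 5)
Your proof is correct, and the bilinearity-of-the-dot-product argument you give is the standard one for Maehara's lemma. Note that the paper itself offers no proof to compare against: it imports the lemma from \cite{maehara, nishimurasakemi2}. You correctly isolate the one point that needs care, namely that hemisphericity forces $\sum_i t_i\widetilde{P}_i\ne 0$ so that every element of the left-hand set is well defined; with that in hand, both inclusions follow exactly as you describe.
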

Maeara's lemma was first given in \cite{maehara}, which is a useful tool to study spherical convex bodies. For details, 
see for instance \cite{nishimurasakemi2}.
%
\subsection{An equivalent definition}
In \cite{nishimurasakemi2}, an equivalent definition of Wulff shape
has been given, which is defined as the composition of the following mappings.
\par
\vspace{1mm}
{\bf 1. Mapping $Id: \mathbb{R}^{n+1}\to \mathbb{R}^{n+1}\times \{1\}$} 
\par
The mapping $Id$ defined by $Id(x)=(x, 1)$.
\par
\vspace{1mm}
{\bf 2. Central projection $\alpha_{{}_{N}}: S_{N,+}^{n+1}\to \mathbb{R}^{n+1}\times \{1\}$}  

\vspace{1mm}
Denote the point $(0, \ldots, 0, 1)\in \mathbb{R}^{n+2}$ by $N$.  
The set $S^{n+1}-H(-N)$ is denoted by $S_{N,+}^{n+1}$.    
Let $\alpha_{{}_{N}}: S_{N,+}^{n+1}\to \mathbb{R}^{n+1}\times \{1\}$ be the central projection relative to 
$N$, namely, $\alpha_N$ is defined as follows for any 
$(P_1, \ldots, P_{n+1}, P_{n+2})\in S_{N, +}^{n+1}$:   
\[
\alpha_{{}_{N}}\left(P_1, \ldots, P_{n+1}, P_{n+2}\right)
=
\left(\frac{P_1}{P_{n+2}}, \ldots, \frac{P_{n+1}}{P_{n+2}}, 1\right).    
\]
Through out remainder of this paper,
let $\widetilde{X}=\alpha_{{}_{N}}^{-1}\circ Id(X)$ for any non-empty subset $X$ of $\mathbb{R}^{n+1}$. For any Wulfff shape $\mathcal{W}\subset \mathbb{R}^{n+1}$, the spherical convex body $\widetilde{\mathcal{W}}=\alpha_{{}_{N}}^{-1}\circ Id(\mathcal{W})$ 
is called {\it spherical Wulff shape of $\mathcal{W}$}.
Then the following two are equivalent (see \cite{nishimurasakemi2} for details ).   
\begin{enumerate}
\item $\widetilde{\mathcal{W}}$ is a spherical Wulff shape.
\item $\widetilde{\mathcal{W}}$ is a spherical convex body such that 
$\widetilde{\mathcal{W}}\cap H(-N)=\emptyset$ and $N$ is an interior point of $\widetilde{\mathcal{W}}$.   
\end{enumerate}

\vspace{1mm}
{\bf 3. Spherical blow-up $\Psi_N:S^{n+1}-\{\pm N\}\to {\color{black}S_{N, +}^{n+1}}$}

\vspace{1mm}
\par 
Next, we consider the mapping $\Psi_N:S^{n+1}-\{\pm N\}\to {\color{black}S_{N, +}^{n+1}}$ 
defined by 
\[
\Psi_N(\widetilde{P})=\frac{1}{\sqrt{1-(N\cdot P)^2}}(N-(N\cdot \widetilde{P})\widetilde{P}).    
\]
The mapping $\Psi_N$, which was first introduced in \cite{nishimura}, 
has the following intriguing properties:   
\begin{enumerate}
\item For any $\widetilde{P}\in S^{n+1}-\{\pm N\}$, the equality $\widetilde{P}\cdot \Psi_N(\widetilde{P})=0$ holds,    
\item for any $\widetilde{P}\in S^{n+1}-\{\pm N\}$, the property $\Psi_N(\widetilde{P})\in \mathbb{R}N+\mathbb{R}\widetilde{P}$ holds,     
\item for any $\widetilde{P}\in S^{n+1}-\{\pm N\}$, the property $N\cdot \Psi_N(\widetilde{P})>0$ holds,      
\item the restriction $\Psi_N|_{S^{n+1}_{N,+}-\{N\}}: S^{n+1}_{N,+}-\{N\}\to S^{n+1}_{N,+}-\{N\}$ is a $C^\infty$ diffeomorphism.   
\end{enumerate}

\vspace{1mm}
{\bf 4. Spherical polar transform $\bigcirc: \mathcal{H}^{\circ}(S^{n+1})\to \mathcal{H}^{\circ}(S^{n+1})$}

\par
Let $\mathcal{H}(S^{n+1})$ be the set consisting of non-empty compact set of $S^{n+1}$. It is clear that the spherical polar set of $S^{n+1}$ is the empty set. 
Let $\mathcal{H}^{\circ}(S^{n+1})$ be the subspace of $\mathcal{H}(S^{n+1})$ defined as follows.
\[
\mathcal{H}^{\circ}(S^{n+1})=\{\widetilde{W}\in \mathcal{H}(S^{n+1})\mid \widetilde{W}^{\circ}\neq \emptyset \}.
\]
The {\it spherical polar transform $\bigcirc: \mathcal{H}^{\circ}(S^{n+1})\to \mathcal{H}^{\circ}(S^{n+1})$} is defined by 
$\bigcirc(\widetilde{W})=\widetilde{W}^{\circ}.$ 
Since $\widetilde{W}\subset \widetilde{W}^{\circ \circ}$ for
any $\widetilde{W} \in \mathcal{H}^{\circ}(S^{n+1})$ 
by Lemma \ref{inclusionitself}, it follows that 
$\widetilde{W}^{\circ}\in \mathcal{H}^{\circ}(S^{n+1})$ for any 
$\widetilde{W}\in \mathcal{H}^{\circ}(S^{n+1})$. 
Thus, the spherical polar transform 
 is well-defined.
It is known that the spherical polar transform is Lipchitz with 
respect to the
Pompeiu-Hausdorff distance(\cite{hannishimura}). Moreover, 
the restriction of the spherical polar transform to the set consisting of 
spherical Wulff shape relative to $\widetilde{P}$ (see Definition \ref{sphericalwulffshape}) is an isometry 
with 
respect to the Pompeiu-Hausdorff distance (\cite{hannishimura}). 
\begin{proposition}[\cite{nishimurasakemi2}]\label{proposition 6}
Let $\gamma: S^{n}\to \mathbb{R}_+$ be a continuous function.    
Then, $\mathcal{W}_\gamma$ {\color{black}is} characterized as follows:   
\[
\mathcal{W}_\gamma = 
Id^{-1}\circ \alpha_{{}_{N}}\left(\left(\Psi_N\circ \alpha_{{}_{N}}^{-1}\circ 
Id\left(\mbox{\rm graph}(\gamma)\right)\right)^\circ\right).   
\]
\end{proposition}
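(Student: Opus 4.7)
The plan is to transport each linear inequality $x\cdot\theta\le\gamma(\theta)$ defining the half-space $\Gamma_{\gamma,\theta}$ into a hemisphere inequality on $S^{n+1}$, and to check that under the correspondence $x\leftrightarrow\alpha_N^{-1}\circ Id(x)$ the two conditions are equivalent. Once that is in hand, intersecting over $\theta$ converts $\mathcal{W}_\gamma$ into the spherical polar set on the right-hand side, and applying $Id^{-1}\circ\alpha_N$ returns us to $\mathbb{R}^{n+1}$.

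First, I would make the composite $\Psi_N\circ\alpha_N^{-1}\circ Id$ explicit on a graph point $\gamma(\theta)\theta\in\mathbb{R}^{n+1}$. Writing $\widetilde{P}_\theta=\alpha_N^{-1}\circ Id(\gamma(\theta)\theta)$, i.e.\ the unit vector in $\mathbb{R}^{n+2}$ proportional to $(\gamma(\theta)\theta,1)$, the three characteristic properties of $\Psi_N$ listed in the preliminaries (orthogonality to $\widetilde{P}_\theta$, planarity with $N$, positive $N$-component) determine $\Psi_N(\widetilde{P}_\theta)$ uniquely, and a short computation gives
\[
\Psi_N(\widetilde{P}_\theta)=\frac{1}{\sqrt{1+\gamma(\theta)^{2}}}\bigl(-\theta,\,\gamma(\theta)\bigr).
\]
This realizes inside $S^{n+1}$ the polar-inversion $(\theta,r)\mapsto(-\theta,1/r)$ that already appeared in the definition of convex integrand, now rescaled to live on the unit sphere.

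Second, for an arbitrary $x\in\mathbb{R}^{n+1}$ set $\widetilde{Q}=\alpha_N^{-1}\circ Id(x)$, the unit vector in $S^{n+1}_{N,+}$ proportional to $(x,1)$. A one-line dot-product computation based on the explicit formula above gives the equivalence
\[
\widetilde{Q}\cdot\Psi_N(\widetilde{P}_\theta)\;\ge\;0\qquad\Longleftrightarrow\qquad x\cdot\theta\;\le\;\gamma(\theta),
\]
so $\widetilde{Q}\in H(\Psi_N(\widetilde{P}_\theta))$ iff $x\in\Gamma_{\gamma,\theta}$. Intersecting over $\theta\in S^n$ then identifies $\mathcal{W}_\gamma$ (via $\alpha_N^{-1}\circ Id$) with the spherical polar of $\widetilde{W}:=\Psi_N\circ\alpha_N^{-1}\circ Id(\mathrm{graph}(\gamma))$, and pushing forward by $Id^{-1}\circ\alpha_N$ yields the claimed identity.

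The main subtlety I expect is confirming that $\widetilde{W}^{\circ}\subset S^{n+1}_{N,+}$, so that $\alpha_N$ is actually defined on the whole polar set and the right-hand side is well posed. Since $\gamma$ is continuous and positive on the compact sphere $S^n$, we have $\min_{S^n}\gamma>0$. For $\widetilde{Q}=(q_1,\dots,q_{n+2})\in\widetilde{W}^{\circ}$, writing $q=(q_1,\dots,q_{n+1})$, the defining inequalities unwind to $\theta\cdot q\le\gamma(\theta)\,q_{n+2}$ for every $\theta\in S^n$; plugging in $\theta=q/\|q\|$ (when $q\ne 0$) forces $q_{n+2}>0$, while the degenerate case $q=0$ collapses to $\widetilde{Q}=N$. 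With that verification in hand the correspondence above becomes exactly the equality of the proposition.
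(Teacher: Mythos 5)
Your proposal is correct, and it is essentially the standard argument: the paper itself states Proposition \ref{proposition 6} as a quoted result from \cite{nishimurasakemi2} without reproducing a proof, and the proof given there is exactly your computation --- the explicit formula $\Psi_N\circ\alpha_N^{-1}\circ Id(\gamma(\theta)\theta)=\frac{1}{\sqrt{1+\gamma(\theta)^2}}(-\theta,\gamma(\theta))$, the dot-product equivalence $\widetilde{Q}\cdot\Psi_N(\widetilde{P}_\theta)\ge 0\Leftrightarrow x\cdot\theta\le\gamma(\theta)$, and the verification that the polar set lies in $S^{n+1}_{N,+}$. The last point (which you correctly flag, using $\min_{S^n}\gamma>0$ and the test direction $\theta=q/\|q\|$) is the one step that is easy to overlook, and your treatment of it is complete.
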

Proposition \ref{proposition 6} implies Wulff shapes $\mathcal{W}_{\gamma_{{}_{1}}}$ 
and $\mathcal{W}_{\gamma_{{}_{2}}}$ are same convex bodies if and only if 
$\left(\Psi_N\circ \alpha_{{}_{N}}^{-1}\circ 
Id\left(\mbox{\rm graph}(\gamma_{{}_{1}})\right)\right)^\circ$ 
and 
$\left(\Psi_N\circ \alpha_{{}_{N}}^{-1}\circ 
Id\left(\mbox{\rm graph}(\gamma_{{}_{2}})\right)\right)^\circ$ 
are same spherical convex bodies. 
By Maehara's lemma (Lemma \ref{maeharalemma}), we know that 
\[
\left(\Psi_N\circ \alpha_{{}_{N}}^{-1}\circ 
Id\left(\mbox{\rm graph}(\gamma_{{}_{i}})\right)\right)^\circ= \left(\mbox{s-conv}
\bigl(
\Psi_N\circ \alpha_{{}_{N}}^{-1}\circ 
Id\left(\mbox{\rm graph}(\gamma_{{}_{i}})\right)
\bigr)
\right)^\circ, \]
where $i=1,2.$ 
Therefore, the following holds: 
\begin{proposition}[\cite{nishimurasakemi2}]\label{proposition 789}
Let $\gamma_{{}_{i}}: S^n \to \mathbb{R}_+$ be continuous function, where $i=1,2$.  
Then the following two statements are equivalent:
\begin{enumerate}
\item $\mathcal{W}_{\gamma_{{}_{1}}}=\mathcal{W}_{\gamma_{{}_{2}}}$.
\item 
$\mbox{\rm s-conv}
\bigl(
\Psi_N\circ \alpha_{{}_{N}}^{-1}\circ 
Id\left(\mbox{\rm graph}(\gamma_{{}_{1}})\right)
\bigr)
=
\mbox{\rm s-conv}
\bigl(
\Psi_N\circ \alpha_{{}_{N}}^{-1}\circ 
Id\left(\mbox{\rm graph}(\gamma_{{}_{2}})\right)
\bigr)$.
\end{enumerate}
\end{proposition}
\begin{proposition}[\cite{nishimurasakemi2}]\label{proposition 7}
For any Wulff shape $\mathcal{W}_\gamma$, 
the following set, {\color{black}too,} is a Wulff shape: 
\[
Id^{-1}\circ \alpha_{{}_{N}}\left(\left(\alpha_{{}_{N}}^{-1}\circ 
Id\left(\mathcal{W}_\gamma\right)\right)^\circ\right).  
\] 
\end{proposition}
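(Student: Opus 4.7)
The plan is to use the equivalent definition of spherical Wulff shape already recorded in the excerpt: a subset $\widetilde{\mathcal{W}}\subset S^{n+1}$ satisfies $\widetilde{\mathcal{W}}=\alpha_N^{-1}\circ Id(\mathcal{W})$ for some Wulff shape $\mathcal{W}\subset\mathbb{R}^{n+1}$ if and only if $\widetilde{\mathcal{W}}$ is a spherical convex body with $\widetilde{\mathcal{W}}\cap H(-N)=\emptyset$ and $N\in\mathrm{int}(\widetilde{\mathcal{W}})$. Writing $\widetilde{\mathcal{W}}_\gamma=\alpha_N^{-1}\circ Id(\mathcal{W}_\gamma)$, the set in question is $Id^{-1}\circ\alpha_N(\widetilde{\mathcal{W}}_\gamma^{\,\circ})$, so the entire proof reduces to verifying those three properties for $\widetilde{\mathcal{W}}_\gamma^{\,\circ}$.

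First I would record that $\widetilde{\mathcal{W}}_\gamma^{\,\circ}=\bigcap_{\widetilde{Q}\in\widetilde{\mathcal{W}}_\gamma}H(\widetilde{Q})$ is automatically closed (intersection of closed hemispheres) and spherical convex (if $\widetilde{P}_1,\widetilde{P}_2\in\widetilde{\mathcal{W}}_\gamma^{\,\circ}$, then for each $\widetilde{Q}\in\widetilde{\mathcal{W}}_\gamma$ the dot products $\widetilde{P}_i\cdot\widetilde{Q}$ are $\geq 0$, and bilinearity of the dot product plus positivity of the normalising scalar yields the same inequality along the arc $\widetilde{P}_1\widetilde{P}_2$). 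This handles the convexity and closedness halves of "spherical convex body".

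Next I would exploit the fact that $\widetilde{\mathcal{W}}_\gamma$ is compact and disjoint from $H(-N)$ to set $\delta:=\min\{N\cdot\widetilde{Q}\mid \widetilde{Q}\in\widetilde{\mathcal{W}}_\gamma\}>0$. A uniform-continuity argument then shows that every $\widetilde{P}\in S^{n+1}$ sufficiently close to $N$ still satisfies $\widetilde{P}\cdot\widetilde{Q}\geq 0$ for every $\widetilde{Q}\in\widetilde{\mathcal{W}}_\gamma$, so a whole spherical neighbourhood of $N$ lies inside $\widetilde{\mathcal{W}}_\gamma^{\,\circ}$. This simultaneously produces an interior point (so $\widetilde{\mathcal{W}}_\gamma^{\,\circ}$ is a genuine spherical convex body) and shows $N$ itself is interior.

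The last and most delicate step is to check $\widetilde{\mathcal{W}}_\gamma^{\,\circ}\cap H(-N)=\emptyset$, i.e. that every $\widetilde{P}\in\widetilde{\mathcal{W}}_\gamma^{\,\circ}$ satisfies $\widetilde{P}\cdot N>0$ strictly. Here I would use that $N$ is an interior point of $\widetilde{\mathcal{W}}_\gamma$: pick $\varepsilon>0$ so that the spherical $\varepsilon$-ball $B_\varepsilon(N)$ is contained in $\widetilde{\mathcal{W}}_\gamma$, and suppose for contradiction that some $\widetilde{P}\in\widetilde{\mathcal{W}}_\gamma^{\,\circ}$ has $\widetilde{P}\cdot N\leq 0$. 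Perturbing $N$ slightly inside $B_\varepsilon(N)$ in the direction of $-\widetilde{P}$ (projected onto $N^{\perp}$) produces a point $\widetilde{Q}_\ast\in\widetilde{\mathcal{W}}_\gamma$ with $\widetilde{P}\cdot \widetilde{Q}_\ast<0$, contradicting $\widetilde{P}\in\widetilde{\mathcal{W}}_\gamma^{\,\circ}$. This perturbation argument is the main obstacle: one has to be careful that $\widetilde{Q}_\ast$ lies in $S^{n+1}$ and in $B_\varepsilon(N)$, and that the sign of $\widetilde{P}\cdot\widetilde{Q}_\ast$ is controlled. With the three properties in hand, the equivalence quoted above immediately identifies $Id^{-1}\circ\alpha_N(\widetilde{\mathcal{W}}_\gamma^{\,\circ})$ as a Wulff shape.
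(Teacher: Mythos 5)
Your proposal is correct. The paper itself gives no proof of Proposition \ref{proposition 7} --- it is quoted from \cite{nishimurasakemi2} --- but your argument is exactly the natural one in the paper's framework: reduce to the stated characterization of spherical Wulff shapes (spherical convex body, disjoint from $H(-N)$, with $N$ interior) and verify all three properties for $\widetilde{\mathcal{W}}_\gamma^{\,\circ}$; the closedness/convexity of a polar set, the compactness argument giving $\delta>0$ and hence $N\in\mbox{int}(\widetilde{\mathcal{W}}_\gamma^{\,\circ})$, and the perturbation $\widetilde{Q}_\ast=\cos(s)N-\sin(s)\widetilde{P}$ (which handles both $\widetilde{P}\cdot N<0$ and $\widetilde{P}\cdot N=0$ at once) are all sound, and the disjointness from $H(-N)$ also supplies the hemisphericality needed for the definition of spherical convex body to apply.
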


\begin{definition}[\cite{nishimurasakemi2}]
{\rm 
For any Wulff shape $\mathcal{W}_\gamma$, the Wulff shape 
{\color{black}given} in Proposition {\color{black}\ref{proposition 7}} is called the {\it dual Wulff shape} of 
$\mathcal{W}_\gamma$ and is denoted by $\mathcal{D}\mathcal{W}_\gamma$.  
} 
\end{definition}
{\color{black}
\noindent 
By Proposition \ref{proposition 7}, the following definition is reasonable.   
\begin{definition}[\cite{nishimurasakemi2}]\label{sphericalwulffshape}
{\rm 
Let $\widetilde{P}$ be a point of $S^{n+1}$.   
\begin{enumerate}
\item {\color{black}A} spherical convex body 
$\widetilde{W}$ such that $\widetilde{W}\cap H(-\widetilde{P})=\emptyset$ 
and $\widetilde{P}\in \mbox{\rm int}(\widetilde{W})$ are satisfied is called 
a {\it spherical Wulff shape} relative to $\widetilde{P}$.     
\item Let $\widetilde{\mathcal{W}}$ be a spherical Wulff shape relative to $\widetilde{P}$.    
Then, the set $\widetilde{\mathcal{W}}^\circ$ 
is called the {\it spherical dual Wulff shape} of the spherical Wulff shape 
$\widetilde{\mathcal{W}}$ relative to $\widetilde{P}$ and is denoted by $\mathcal{D}\widetilde{\mathcal{W}}$.   
\end{enumerate}
}
\end{definition}
}
\begin{proposition}[\cite{nishimurasakemi2}]\label{dualandboundary}
Let $\bar{\gamma}:S^{n}\to \mathbb{R}_{+}$ be a convex integrand. 
Let $\mathcal{D}\mathcal{W}_{\gamma}$ be the dual Wulff shape of $\mathcal{W}_{\gamma}.$ 
Then the boundary of the $\mathcal{D}\mathcal{W}_{\gamma}$ is exactly $\Gamma_\gamma=\mbox{inv}(\mbox{graph}(\gamma)).$
\end{proposition}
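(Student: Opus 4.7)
The plan is to translate the statement to the sphere, apply the polar-duality machinery of Section 2, and then push everything back to $\mathbb{R}^{n+1}$ where the convex-integrand hypothesis finishes the job. First, I would use the definition of the dual Wulff shape together with Proposition \ref{proposition 6} to obtain
\[
\alpha_N^{-1}\circ Id(\mathcal{D}\mathcal{W}_\gamma) \;=\; \widetilde{\mathcal{W}_\gamma}^\circ \;=\; \bigl(\Psi_N\circ\alpha_N^{-1}\circ Id(\mbox{graph}(\gamma))\bigr)^{\circ\circ}.
\]
Set $\widetilde{W}:=\Psi_N\circ\alpha_N^{-1}\circ Id(\mbox{graph}(\gamma))$. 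Since $S^n$ is compact and all maps involved are continuous, $\widetilde{W}$ is compact; it also lies in $S^{n+1}_{N,+}$, so it is hemispherical. Because every hemisphere $H(\widetilde{P})$ is spherically convex, the polar of a set coincides with the polar of its spherical convex hull, hence $\widetilde{W}^\circ=(\mbox{s-conv}(\widetilde{W}))^\circ$. Applying Proposition \ref{proposition 2.1} to the closed hemispherical set $\mbox{s-conv}(\widetilde{W})$ then yields
\[
\alpha_N^{-1}\circ Id(\mathcal{D}\mathcal{W}_\gamma) \;=\; (\mbox{s-conv}(\widetilde{W}))^{\circ\circ} \;=\; \mbox{s-conv}(\widetilde{W}).
\]

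Next, I would unwind the long composition $Id^{-1}\circ\alpha_N\circ\Psi_N\circ\alpha_N^{-1}\circ Id$ by a short direct calculation from the explicit formulas: for a point $r\theta\in\mathbb{R}^{n+1}$ with $\theta\in S^n$ and $r>0$, one obtains $-\theta/r$, which is exactly $\mbox{inv}(\theta,r)$. Thus the composition agrees with $\mbox{inv}$ on $\mathbb{R}^{n+1}-\{0\}$, and in particular
\[
Id^{-1}\circ\alpha_N(\widetilde{W}) \;=\; \mbox{inv}(\mbox{graph}(\gamma)).
\]
Moreover, the central projection $\alpha_N$ sends every great arc lying in $S^{n+1}_{N,+}$ to a straight segment in $\mathbb{R}^{n+1}\times\{1\}$, since such an arc lies in a $2$-plane through the origin whose intersection with that affine hyperplane is a line. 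Consequently $\alpha_N$ intertwines spherical convex hulls with Euclidean convex hulls, and combining with the previous display,
\[
\mathcal{D}\mathcal{W}_\gamma \;=\; Id^{-1}\circ\alpha_N(\mbox{s-conv}(\widetilde{W})) \;=\; \mbox{conv}\bigl(\mbox{inv}(\mbox{graph}(\gamma))\bigr).
\]

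Finally, I would invoke the convex-integrand hypothesis. By the definition recalled in the introduction, $\gamma$ being a convex integrand means precisely that $\mbox{inv}(\mbox{graph}(\gamma))$ coincides with $\Gamma_\gamma$, which in turn is the boundary of the convex hull of $\mbox{inv}(\mbox{graph}(\gamma))$. Therefore
\[
\partial\mathcal{D}\mathcal{W}_\gamma \;=\; \partial\,\mbox{conv}\bigl(\mbox{inv}(\mbox{graph}(\gamma))\bigr) \;=\; \Gamma_\gamma,
\]
as desired. I expect the main obstacle to be the middle step: cleanly justifying that $\alpha_N$ carries the spherical convex hull of $\widetilde{W}$ onto the Euclidean convex hull of $\mbox{inv}(\mbox{graph}(\gamma))$, and that $\mbox{s-conv}(\widetilde{W})$ is closed so that Proposition \ref{proposition 2.1} genuinely applies. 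The formula-crunching reduction of $Id^{-1}\circ\alpha_N\circ\Psi_N\circ\alpha_N^{-1}\circ Id$ to $\mbox{inv}$ is straightforward but is a place where sign errors could easily sneak in.
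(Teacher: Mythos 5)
Your proposal is correct, and it takes a genuinely different route from the argument the paper attaches to Proposition \ref{dualandboundary}. The paper's proof stays entirely in $\mathbb{R}^{n+1}$: it introduces the auxiliary convex integrand $\gamma_{{}_{\mathcal{W}_{{}_\gamma}}}$ determined by $\Gamma_\gamma=\mbox{inv}(\mbox{graph}(\gamma_{{}_{\mathcal{W}_{{}_\gamma}}}))$, takes for granted that $\Gamma_\gamma$ is the relevant boundary, and then shows $\gamma_{{}_{\mathcal{W}_{{}_\gamma}}}=\gamma$ by contradiction, invoking strict convexity of the convex hull of $\mbox{inv}(\mbox{graph}(\gamma))$ obtained from a $C^1$ hypothesis on the boundary --- so that argument really carries an extra smoothness assumption and presupposes rather than proves the identification of $\partial\mathcal{D}\mathcal{W}_\gamma$ with $\Gamma_\gamma$. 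You instead prove the stronger, hypothesis-free identity $\mathcal{D}\mathcal{W}_\gamma=\mbox{conv}\bigl(\mbox{inv}(\mbox{graph}(\gamma))\bigr)$ directly from the spherical machinery of Section 2 (Propositions \ref{proposition 6} and \ref{proposition 7}, biduality, and the computation $Id^{-1}\circ\alpha_N\circ\Psi_N\circ\alpha_N^{-1}\circ Id=\mbox{inv}$, which does check out: $r\theta\mapsto -\theta/r$), and only at the end use the convex-integrand hypothesis to read off the boundary; this is cleaner and matches the spirit of the proof of Theorem \ref{maximumminimumoperator}. Two small repairs to your middle step. First, apply Proposition \ref{proposition 2.1} to $\widetilde{W}$ itself: it is the continuous image of the compact set $\mbox{graph}(\gamma)$ and lies in $S^{n+1}_{N,+}$, hence is closed and hemispherical, and together with $\widetilde{W}^{\circ}=(\mbox{s-conv}(\widetilde{W}))^{\circ}$ this gives $\widetilde{W}^{\circ\circ}=\mbox{s-conv}(\widetilde{W})$ with no need to verify closedness of the hull. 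Second, justify $\widetilde{W}^{\circ}=(\mbox{s-conv}(\widetilde{W}))^{\circ}$ from the explicit formula for $\mbox{s-conv}$ (if $\widetilde{W}\subset H(\widetilde{P})$ then every normalized nonnegative combination of points of $\widetilde{W}$ still pairs nonnegatively with $\widetilde{P}$), rather than by calling $H(\widetilde{P})$ spherically convex --- under the paper's definitions a closed hemisphere is not hemispherical, hence not ``spherical convex.'' The hull-intertwining step you flagged as the main obstacle is in fact unproblematic: $\mbox{s-conv}(\widetilde{W})$ lies in the open hemisphere $S^{n+1}_{N,+}$, and $\alpha_N$ sends $\sum_i t_i\widetilde{P}_i/||\sum_i t_i\widetilde{P}_i||$ to the convex combination of the points $\alpha_N(\widetilde{P}_i)$ with weights proportional to $t_i(N\cdot\widetilde{P}_i)$, so spherical and Euclidean convex hulls correspond exactly.
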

Proposition \ref{proposition 6} gives a new powerful spherical method to study Wulff shapes. For example, the self-dual Wulff shape $\mathcal{W}_{\gamma}=\mathcal{DW}_{\gamma}$ 
can be characterized by the induced spherical convex body of constant width ${\pi}/{2}$ (\cite{hannishimura3}). More details on width of spherical convex bodies and their duals, see for instance 
\cite{hanwu}--\cite{michal}.
For related topics on the spherical method see for instance \cite{hannishimura4}. 
\section{Proof of Theorem \ref{maximumoperator}}\label{section 3}
By Proposition \ref{proposition 6}, $\mathcal{W}_{\gamma_{{}_{max}}}$ can be rewritten as
\begin{alignat*}{3}
\mathcal{W}_{\gamma_{{}_{max}}}&= Id^{-1}\circ \alpha_{{}_{N}}\left( \left(\Psi_{N}\circ \alpha_{{}_{N}}^{-1}\circ Id({\rm graph}(\gamma_{{}_{max}}))\right)^{\circ}\right)\notag \\
&=  Id^{-1}\circ \alpha_{{}_{N}} \left((\partial (\mathcal{D}\widetilde{\mathcal{W}}_{\gamma_{{}_{1}}}\cap \mathcal{D}\widetilde{\mathcal{W}}_{\gamma_{{}_{2}}}))^{\circ}\right)\\
&= Id^{-1}\circ \alpha_{{}_{N}}\left( ( \mathcal{D}\widetilde{\mathcal{W}}_{\gamma_{{}_{1}}}\cap \mathcal{D}\widetilde{\mathcal{W}}_{\gamma_{{}_{2}}})^{\circ}\right) 
\end{alignat*}
where $\partial  (\mathcal{D}\widetilde{\mathcal{W}}_{\gamma_{{}_{1}}}\cap \mathcal{D}\widetilde{\mathcal{W}}_{\gamma_{{}_{2}}})$ 
is the boundary of $ (\mathcal{D}\widetilde{\mathcal{W}}_{\gamma_{{}_{1}}}\cap \mathcal{D}\widetilde{\mathcal{W}}_{\gamma_{{}_{2}}})$, and 
\[
\widetilde{\mathcal{W}}_{\gamma_{{}_{i}}}=\alpha_{{}_{N}}^{-1}\circ Id(\mathcal{W}_{\gamma_{{}_{i}}})
\]
 is the spherical Wulff shape of 
$\mathcal{W}_{\gamma_{{}_{i}}}, i= 1, 2$. 
Here, the second equality follows from Proposition \ref{dualandboundary} and 
the third equality follows from Maehara's lemma (Lemma \ref{maeharalemma}).
Thus it is sufficient to prove the following:
\[
(\mathcal{D}\widetilde{\mathcal{W}}_{\gamma_{{}_{1}}}\cap \mathcal{D}\widetilde{\mathcal{W}}_{\gamma_{{}_{2}}})^{\circ} 
= 
\mbox{s-conv}
(\widetilde{\mathcal{W}}_{\gamma_{{}_{1}}}\cup \widetilde{\mathcal{W}}_{\gamma_{{}_{2}}}). 
\]
\indent
First, we show that 
$( \mathcal{D}\widetilde{\mathcal{W}}_{\gamma_{{}_{1}}}\cap \mathcal{D}\widetilde{\mathcal{W}}_{\gamma_{{}_{2}}})^{\circ}\subset 
\mbox{s-conv}(\widetilde{\mathcal{W}}_{\gamma_{{}_{1}}}\cup \widetilde{\mathcal{W}}_{\gamma_{{}_{2}}})$.
Let $\widetilde{P}$ be a point of 
$(\widetilde{\mathcal{W}}_{\gamma_{{}_{1}}}\cup \widetilde{\mathcal{W}}_{\gamma_{{}_{2}}})^{\circ}$. 
Then it follows that
\[
\widetilde{\mathcal{W}}_{\gamma_{{}_{1}}}\subset H(\widetilde{P})
\ {\rm and}\ 
\widetilde{\mathcal{W}}_{\gamma_{{}_{2}}}\subset H(\widetilde{P}).
\]
Thus $\widetilde{P}$ is a point of 
$\widetilde{\mathcal{W}}_{\gamma_{{}_{1}}}^{\circ}\cap 
\widetilde{\mathcal{W}}_{\gamma_{{}_{2}}}^{\circ}=\mathcal{D}\widetilde{\mathcal{W}}_{\gamma_{{}_{1}}}\cap \mathcal{D}\widetilde{\mathcal{W}}_{\gamma_{{}_{2}}}$. 
This implies that
\[
(\widetilde{\mathcal{W}}_{\gamma_{{}_{1}}}\cup \widetilde{\mathcal{W}}_{\gamma_{{}_{2}}})^{\circ}
\subset
\widetilde{\mathcal{W}}_{\gamma_{{}_{1}}}^{\circ}\cap 
\widetilde{\mathcal{W}}_{\gamma_{{}_{2}}}^{\circ}
=\mathcal{D}\widetilde{\mathcal{W}}_{\gamma_{{}_{1}}}\cap \mathcal{D}\widetilde{\mathcal{W}}_{\gamma_{{}_{2}}}.
\]
Then by Maehara's lemma, we have
\[ 
\bigl(\mbox{s-conv}(\widetilde{\mathcal{W}}_{\gamma_{{}_{1}}}\cup \widetilde{\mathcal{W}}_{\gamma_{{}_{2}}})\bigr)^{\circ}
=
(\widetilde{\mathcal{W}}_{\gamma_{{}_{1}}}\cup \widetilde{\mathcal{W}}_{\gamma_{{}_{2}}})^{\circ}
\subset
\mathcal{D}\widetilde{\mathcal{W}}_{\gamma_{{}_{1}}}\cap \mathcal{D}\widetilde{\mathcal{W}}_{\gamma_{{}_{2}}}.
\]
Since $\bigl(\mbox{s-conv}(\widetilde{\mathcal{W}}_{\gamma_{{}_{1}}}\cup \widetilde{\mathcal{W}}_{\gamma_{{}_{2}}})\bigr)^{\circ\circ}
=
\mbox{s-conv}(\widetilde{\mathcal{W}}_{\gamma_{{}_{1}}}\cup \widetilde{\mathcal{W}}_{\gamma_{{}_{2}}})$ 
(Proposition \ref{proposition 2.1}), by Lemma \ref{inclusionlem}, it follows that
\[
\bigl(\mathcal{D}\widetilde{\mathcal{W}}_{\gamma_{{}_{1}}}\cap \mathcal{D}\widetilde{\mathcal{W}}_{\gamma_{{}_{2}}}\bigr)^\circ
\subset 
\bigl(\mbox{s-conv}(\widetilde{\mathcal{W}}_{\gamma_{{}_{1}}}\cup \widetilde{\mathcal{W}}_{\gamma_{{}_{2}}})\bigr)^{\circ\circ}
=
\mbox{s-conv}(\widetilde{\mathcal{W}}_{\gamma_{{}_{1}}}\cup \widetilde{\mathcal{W}}_{\gamma_{{}_{2}}}).
\] 
\par
\medskip
\indent
Next, we show that  
$\mbox{s-conv}(\widetilde{\mathcal{W}}_{\gamma_{{}_{1}}}\cup \widetilde{\mathcal{W}}_{\gamma_{{}_{2}}})\subset 
( \mathcal{D}\widetilde{\mathcal{W}}_{\gamma_{{}_{1}}}\cap \mathcal{D}\widetilde{\mathcal{W}}_{\gamma_{{}_{2}}})^{\circ}$. 
Since $\mathcal{D}\widetilde{\mathcal{W}}_{\gamma_{{}_{1}}}\cap \mathcal{D}\widetilde{\mathcal{W}}_{\gamma_{{}_{2}}}$ is a subset of 
$\mathcal{D}\widetilde{\mathcal{W}}_{\gamma_{{}_{1}}}$, 
by Lemma \ref{inclusionlem}, we have  
\[
(\mathcal{D}\widetilde{\mathcal{W}}_{\gamma_{{}_{1}}}\cap \mathcal{D}\widetilde{\mathcal{W}}_{\gamma_{{}_{2}}})\subset \widetilde{W}_{\gamma_{{}_{1}}}^{\circ}. 
\]
Then by Proposition \ref{proposition 2.1}, it follows that
\[
\widetilde{W}_{\gamma_{{}_{1}}}=\widetilde{W}_{\gamma_{{}_{1}}}^{\circ\circ}
\subset (\mathcal{D}\widetilde{\mathcal{W}}_{\gamma_{{}_{1}}}\cap \mathcal{D}\widetilde{\mathcal{W}}_{\gamma_{{}_{2}}})^{\circ}.
\]
In the same way, the following inclusion is holds:
\[
\widetilde{W}_{\gamma_{{}_{2}}}
\subset (\mathcal{D}\widetilde{\mathcal{W}}_{\gamma_{{}_{1}}}\cap \mathcal{D}\widetilde{\mathcal{W}}_{\gamma_{{}_{2}}})^{\circ}. 
\]
Since $\mbox{s-conv}(\widetilde{\mathcal{W}}_{\gamma_{{}_{1}}}\cup \widetilde{\mathcal{W}}_{\gamma_{{}_{2}}})$ 
is the smallest convex body containing 
$\widetilde{\mathcal{W}}_{\gamma_{{}_{1}}}\cup \widetilde{\mathcal{W}}_{\gamma_{{}_{2}}}$ (Lemma \ref{smallestlem}) and
$(\mathcal{D}\widetilde{\mathcal{W}}_{\gamma_{{}_{1}}}\cap \mathcal{D}\widetilde{\mathcal{W}}_{\gamma_{{}_{2}}})^{\circ}$ 
is a convex body, it follows that 
\[
\mbox{s-conv}
(\widetilde{\mathcal{W}}_{\gamma_{{}_{1}}}\cup \widetilde{\mathcal{W}}_{\gamma_{{}_{2}}})\subset 
(\mathcal{D}\widetilde{\mathcal{W}}_{\gamma_{{}_{1}}}\cap \mathcal{D}\widetilde{\mathcal{W}}_{\gamma_{{}_{2}}})^{\circ}. 
\] 
\indent
This proves the Theorem. \hfill{$\square$}
\medskip
\\
\indent
\par
Remark that the condition $\lq\lq$convex integrand" of Theorem \ref{maximumoperator} is necessary. 
Theorem \ref{maximumoperator} does not hold in general (see Figure \ref{counter1} and \ref{counter2}).
\begin{figure}[htbp]  \begin{center}
    \includegraphics[clip,width=5.0cm]{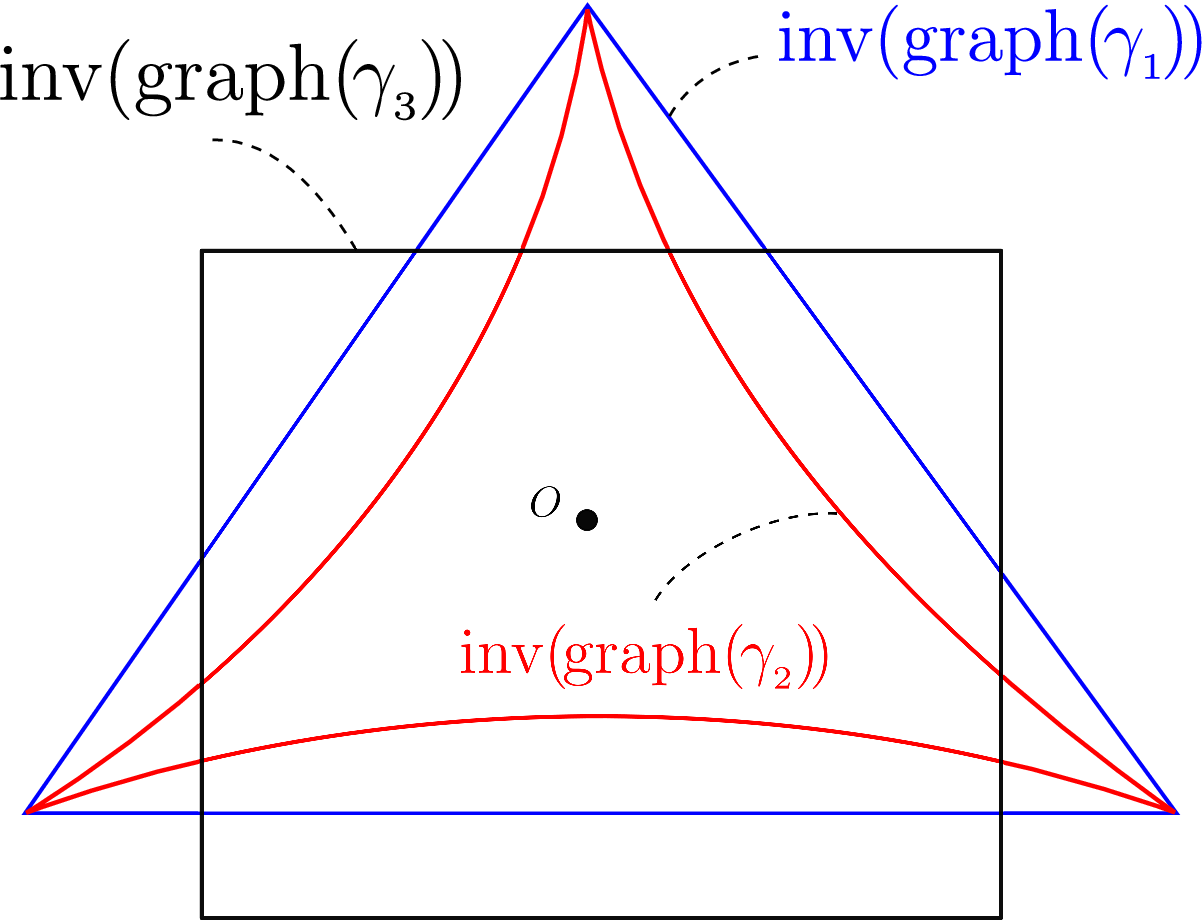}
    \caption{Boundary of the Wulff shape $\mathcal{DW}_{\gamma_{{}_{i}}}$, where $i=1,2,3$ and $\mathcal{DW}_{\gamma_{{}_{1}}}= \mathcal{DW}_{\gamma_{{}_{2}}}$.}
    \label{counter1}
 \end{center}
\end{figure}

\begin{figure}[htbp]
  \begin{center}
    \includegraphics[clip,width=13cm]{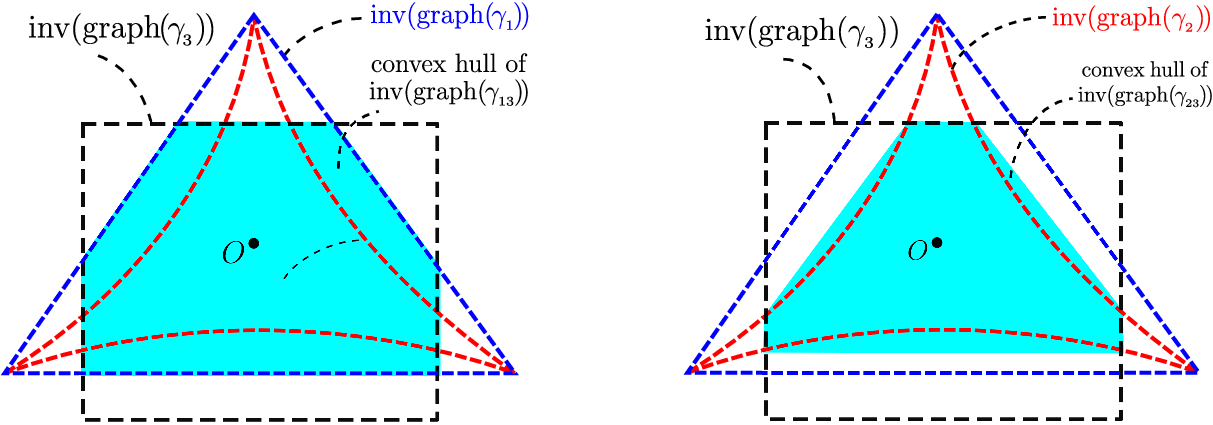}
    \caption{$\mathcal{DW}_{\gamma_{{}_{13}}}\not= \mathcal{DW}_{\gamma_{{}_{23}}}$, where
    $\gamma_{{}_{ij}}$ is defined by the maximum of the support functions $\gamma_{{}_{i}}, \gamma_{{}_{j}}$.}
\label{counter2}
  \end{center}
\end{figure}
\par
\indent
Usually, for given convex integrands 
$\gamma_{{}_{1}}$ and $\gamma_{{}_{2}}$, the inversion of 
$\mbox{graph}(\gamma_{{}_{min}})$ with respect to the origin of $\mathbb{R}^{n+1}$ does not necessarily equal to $\Gamma_{\gamma_{{}_{min}}}$. 
Thus the function 
$\gamma_{{}_{min}}$ is not convex integrand in general. 
On the other hand, the inversion of $\mbox{graph}(\gamma_{{}_{max}})$ with respect to the origin of $\mathbb{R}^{n+1}$ is exactly $\Gamma_{\gamma_{{}_{max}}}$, which implies that $\gamma_{{}_{max}}$ is a convex integrand. Thus we can generalize Theorem \ref{maximumoperator} as follows.
\begin{corollary}
Let $\gamma_{{}_{1}}, \dots , \gamma_{{}_{m}}$ be convex integrands. 
Let $\widetilde{\gamma}_{{}_{max}}: S^{n}\to \mathbb{R}_{+}$ be the function  defined as $\widetilde{\gamma}_{{}_{max}}(\theta)={\rm max}\{\gamma_{{}_1}(\theta), \dots , \gamma_{{}_m}(\theta)\}.$ Then the Wulff shape of $\widetilde{\gamma}_{{}_{max}}$ is the convex hull of $\mathcal{W}_{\gamma_{{}_{1}}}\cup \dots \cup \mathcal{W}_{\gamma_{{}_{m}}}$.
\end{corollary}
\section{Proof of Theorem \ref{minimumoperator}}\label{section 4}
We first prove that the equality
$
\mathcal{W}_{\gamma_{{}_{min}}}= \mathcal{W}_{\gamma_{{}_{1}}}\cap \mathcal{W}_{\gamma_{{}_{2}}}
$
holds for any convex integrands $\gamma_{{}_1}, \gamma_{{}_2}$. 
\begin{lemma}\label{minconvexintegrand}
Let $\bar{\gamma}_{{}_1}, \bar{\gamma}_{{}_2}$ be convex integrands such that 
$\mathcal{W}_{\gamma_{{}_{1}}}=\mathcal{W}_{\bar{\gamma}_{{}_{1}}}, \mathcal{W}_{\gamma_{{}_{2}}}=\mathcal{W}_{\bar{\gamma}_{{}_{2}}}$. 
Set $\bar{\gamma}_{{}_{min}}: S^n\to \mathbb{R}_+$ is the continuous function defined by 
$\bar{\gamma}_{{}_{min}}(\theta)= {\rm min}\{\bar{\gamma}_{{}_{1}}(\theta), \bar{\gamma}_{{}_{2}}(\theta)\}.$
Then $\mathcal{W}_{\bar{\gamma}_{{}_{min}}}= \mathcal{W}_{\bar{\gamma}_{{}_{1}}}\cap \mathcal{W}_{\bar{\gamma}_{{}_{2}}}$.
\end{lemma}
\underline{Proof of Lemma \ref{minconvexintegrand}.} 
By Proposition \ref{proposition 6}, $\mathcal{W}_{\bar{\gamma}_{{}_{min}}}$ can be rewritten as
\begin{alignat*}{3} 
\mathcal{W}_{\bar{\gamma}_{{}_{min}}}&= Id^{-1}\circ \alpha_{N}\left( (\Psi_{N}\circ \alpha_{{}_{N}}^{-1}\circ Id({\rm graph}(\bar{\gamma}_{{}_{min}})))^{\circ}\right)\notag \\
&=  Id^{-1}\circ \alpha_{{}_{N}}\left( (\partial (\mathcal{D}\widetilde{\mathcal{W}}_{\bar{\gamma}_{{}_{1}}}\cup \mathcal{D}\widetilde{\mathcal{W}}_{\bar{\gamma}_{{}_{2}}}))^{\circ}\right)\\
&= Id^{-1}\circ \alpha_{{}_{N}}\left( ( \mathcal{D}\widetilde{\mathcal{W}}_{\bar{\gamma}_{{}_{1}}}\cup \mathcal{D}\widetilde{\mathcal{W}}_{\bar{\gamma}_{{}_{2}}})^{\circ}\right).
\end{alignat*}
Here, the second equality follows from Proposition \ref{dualandboundary} and 
the thrid equality follows from Maehara's lemma.
Thus it is sufficient to prove the following:
\[
        (\mathcal{D}\widetilde{\mathcal{W}}_{\bar{\gamma}_{{}_{1}}}\cup \mathcal{D}\widetilde{\mathcal{W}}_{\bar{\gamma}_{{}_{2}}})^{\circ} 
= 
        \widetilde{\mathcal{W}}_{\bar{\gamma}_{{}_{1}}}\cap \widetilde{\mathcal{W}}_{\bar{\gamma}_{{}_{2}}}. 
\]
\indent
First, we show that 
\[
(\mathcal{D}\widetilde{\mathcal{W}}_{\bar{\gamma}_{{}_{1}}}\cup \mathcal{D}\widetilde{\mathcal{W}}_{\bar{\gamma}_{{}_{2}}})^{\circ}
                \subset  
\bigl(
\widetilde{\mathcal{W}}_{\bar{\gamma}_{{}_{1}}}\cap \widetilde{\mathcal{W}}_{\bar{\gamma}_{{}_{2}}}
\bigr).
\]
Let $\widetilde{P}\in (\mathcal{D}\widetilde{\mathcal{W}}_{\bar{\gamma}_{{}_{1}}}\cup \mathcal{D}\widetilde{\mathcal{W}}_{\bar{\gamma}_{{}_{2}}})^{\circ}$. 
Then it follows that $\mathcal{D}\widetilde{\mathcal{W}}_{\bar{\gamma}_{{}_{1}}}\subset  H(\widetilde{P})$. 
By Proposition \ref{proposition 789}, we have
\[
\widetilde{P}\in (\mathcal{D}\widetilde{\mathcal{W}}_{\bar{\gamma}_{{}_{1}}})^{\circ}=\widetilde{\mathcal{W}}_{\bar{\gamma}_{{}_{1}}}^{\circ\circ}=\widetilde{\mathcal{W}}_{\bar{\gamma}_{{}_{1}}}.
\]
\indent
In the same way, we have that
$\widetilde{P}\in\widetilde{\mathcal{W}}_{\bar{\gamma}_{{}_{2}}}$. 
Thus $\widetilde{P}\in \widetilde{\mathcal{W}}_{\bar{\gamma}_{{}_{1}}}\cap \widetilde{\mathcal{W}}_{\bar{\gamma}_{{}_{2}}}$. \\
\par
\medskip
Next, we show that 
\[
\bigl(
\widetilde{\mathcal{W}}_{\bar{\gamma}_{{}_{1}}}\cap \widetilde{\mathcal{W}}_{\bar{\gamma}_{{}_{2}}}
\bigr)
\subset 
(\mathcal{D}\widetilde{\mathcal{W}}_{\bar{\gamma}_{{}_{1}}}\cup \mathcal{D}\widetilde{\mathcal{W}}_{\bar{\gamma}_{{}_{2}}})^{\circ}.
\]
 Let $\widetilde{P}$ be a point of  $\widetilde{\mathcal{W}}_{\bar{\gamma}_{{}_{1}}}\cap \widetilde{\mathcal{W}}_{\bar{\gamma}_{{}_{2}}}$. 
Since $\widetilde{\mathcal{W}}_{\bar{\gamma}_{{}_{1}}}$ and 
$\widetilde{\mathcal{W}}_{\bar{\gamma}_{{}_{2}}}$ 
are spherical convex bodies, by Proposition \ref{proposition 2.1}, it follows that 
\[
\widetilde{P}\in \widetilde{\mathcal{W}}_{\bar{\gamma}_{{}_{i}}}
=
\bigl(
\mathcal{D}\widetilde{\mathcal{W}}_{\bar{\gamma}_{{}_{i}}}
\bigr)^\circ
=\bigcap_{\widetilde{Q}\in \mathcal{D}\widetilde{\mathcal{W}}_{\bar{\gamma}_{{}_{i}}}}H(\widetilde{Q}), 
\]
where $i=1, 2$.
This implies
\[
 \mathcal{D}\widetilde{\mathcal{W}}_{\bar{\gamma}_{{}_{1}}}\subset H(\widetilde{P})\ {\rm and}\ \mathcal{D}\widetilde{\mathcal{W}}_{\bar{\gamma}_{{}_{2}}}\subset H(\widetilde{P}).
\]
Then it follows that
$\mathcal{D}\widetilde{\mathcal{W}}_{\bar{\gamma}_{{}_{1}}}\cup \mathcal{D}\widetilde{\mathcal{W}}_{\bar{\gamma}_{{}_{2}}}$
is a subset of $H(\widetilde{P})$.
By the definition of spherical polar set, 
$\widetilde{P}$ is a point of 
$(\mathcal{D}\widetilde{\mathcal{W}}_{\bar{\gamma}_{{}_{1}}}\cup \mathcal{D}\widetilde{\mathcal{W}}_{\bar{\gamma}_{{}_{2}}})^{\circ}$. 
Therefore, 
$
\widetilde{\mathcal{W}}_{\bar{\gamma}_{{}_{1}}}\cap \widetilde{\mathcal{W}}_{\bar{\gamma}_{{}_{2}}}
$ 
is a subset of
$(\mathcal{D}\widetilde{\mathcal{W}}_{\bar{\gamma}_{{}_{1}}}\cup \mathcal{D}\widetilde{\mathcal{W}}_{\bar{\gamma}_{{}_{2}}})^{\circ}$. 
\hfill{$\square$} \\
\par
\indent
The next lemma is useful in the coming proof.
\begin{lemma} \label{lemma42}
The following inclusion is holds.
\[
\bigl(
\mbox{\rm s-conv}\left(\Psi_{N}\circ \alpha_{{}_{N}}^{-1}\circ Id({\rm graph}(\gamma_{{}_{min}}))\right)
\bigr)^\circ
\subset
\bigl(
\mbox{\rm s-conv} \left( \Psi_{N}\circ \alpha_{{}_{N}}^{-1}\circ Id({\rm graph}(\bar{\gamma}_{{}_{min}}))\right)
\bigr)^\circ.
\]
\end{lemma}
\noindent
\underline{Proof of Lemma \ref{lemma42}.}
Since $\gamma_{{}_{min}}(\theta)\leq \gamma_{{}_{1}}(\theta)$, for any $\theta\in S^n$, it follows that 
\[
\mid\mid (\theta, \gamma_{{}_{min}}(\theta)) \mid\mid\leq \mid\mid (\theta, \gamma_{{}_{1}}(\theta)) \mid\mid.
\] 
Set 
\[
\widetilde{R}_1=\alpha_{{}_{N}}^{-1}\circ Id((\theta, \gamma_{{}_{1}}(\theta))\ 
\mbox{and}\ 
\widetilde{R}_{min}=\alpha_{{}_{N}}^{-1}\circ Id((\theta, \gamma_{{}_{min}}(\theta)).
\]
By the properties $(1)-(3)$ of spherical blow-up $\Psi_N$, we have
\begin{alignat*}{3}
\mid\mid N \bigl(\Psi_{N}(\widetilde{R}_1)\bigr)\mid\mid
&= \frac{\pi}{2}-\mid\mid N \widetilde{R}_1\mid\mid\notag \\
&\leq  \frac{\pi}{2}-\mid\mid N \widetilde{R}_{min}\mid\mid\notag \\
&=\mid\mid N \bigl(\Psi_{N}( \widetilde{R}_{min})\bigr)\mid\mid,  \notag
\end{alignat*}
for any $\theta\in S^n$ (see Figure \ref{sphericalblowup}). 
\begin{figure}[htbp]\label{sphericalblowup}
  \begin{center}
    \includegraphics[clip,width=7.0cm]{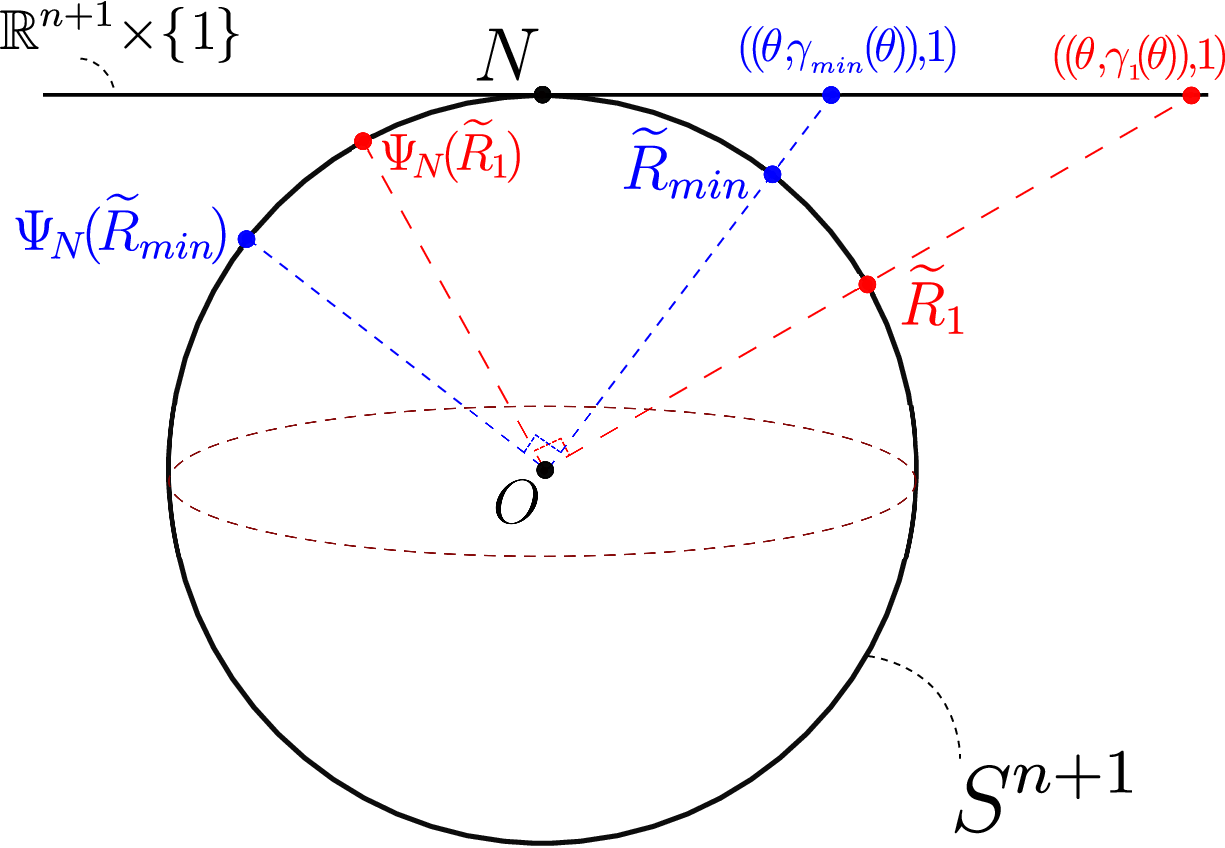}
    \caption{
    $\mid\mid N \bigl(\Psi_{N}( \widetilde{R})\bigr)\mid\mid
    \leq 
    \mid\mid N \bigl(\Psi_{N}(\widetilde{R})\bigr)\mid\mid$
    .}
    \label{convexintegrand}
  \end{center}
\end{figure}
Thus it follows that 
\[
\tag{$1$} 
\mbox{s-conv}\left(\Psi_{N}\circ \alpha_{{}_{N}}^{-1}\circ Id({\rm graph}(\gamma_{{}_{1}}))\right)
\subset 
\mbox{s-conv}\left(\Psi_{N}\circ \alpha_{{}_{N}}^{-1}\circ Id({\rm graph}(\gamma_{{}_{min}}))\right).
\]
Since $\bar{\gamma}_{{}_{1}}$ is the convex integrand of  $\mathcal{W}_{\gamma_{{}_{1}}}$, 
by Proposition \ref{proposition 789}, we have 
\[\tag{$2$} 
\mbox{s-conv}\left(\Psi_{N}\circ \alpha_{{}_{N}}^{-1}\circ Id({\rm graph}(\bar{\gamma}_{{}_{1}}))\right)
=
\mbox{s-conv}\left(\Psi_{N}\circ \alpha_{{}_{N}}^{-1}\circ Id({\rm graph}(\gamma_{{}_{1}}))\right).
\]
Putting ($1$) together with ($2$), we conclude that 
\[\tag{$3$} 
\mbox{s-conv}\left(\Psi_{N}\circ \alpha_{{}_{N}}^{-1}\circ Id({\rm graph}(\bar{\gamma}_{{}_{1}}))\right)
\subset 
\mbox{s-conv}\left(\Psi_{N}\circ \alpha_{{}_{N}}^{-1}\circ Id({\rm graph}(\gamma_{{}_{min}}))\right).
\]
In the same way, it follows that
\[\tag{$4$} 
\mbox{s-conv}\left(\Psi_{N}\circ \alpha_{{}_{N}}^{-1}\circ Id({\rm graph}(\bar{\gamma}_{{}_{2}}))\right)
\subset 
\mbox{s-conv}\left(\Psi_{N}\circ \alpha_{{}_{N}}^{-1}\circ Id({\rm graph}(\gamma_{{}_{min}}))\right).
\]
Because $\Psi_{N}\circ \alpha_{{}_{N}}^{-1}\circ Id({\rm graph}(\bar{\gamma}_{{}_{i}}))$ 
is a subset of 
$\mbox{s-conv}\left(\Psi_{N}\circ \alpha_{{}_{N}}^{-1}\circ Id({\rm graph}(\bar{\gamma}_{{}_{i}}))\right)$, 
$i= 1, 2,$ it follows that
\begin{alignat*}{3}
&\mbox{s-conv} \left( \Psi_{N}\circ \alpha_{{}_{N}}^{-1}\circ Id({\rm graph}(\bar{\gamma}_{{}_{min}}))\right)\\
&= \mbox{s-conv}\Bigl(
\cup_{i=1}^2 \bigl(\Psi_{N}\circ \alpha_{{}_{N}}^{-1}\circ Id({\rm graph}(\bar{\gamma}_{{}_{i}})\bigr)\Bigr)\notag \\
&\subset  \mbox{s-conv}
\Bigl(
\cup_{i=1}^2
\mbox{s-conv}\bigl(\Psi_{N}\circ \alpha_{{}_{N}}^{-1}\circ Id({\rm graph}(\bar{\gamma}_{{}_{i}}))\bigr) \Bigr)\\
&\subset \mbox{s-conv}\left(\Psi_{N}\circ \alpha_{{}_{N}}^{-1}\circ Id({\rm graph}(\gamma_{{}_{min}}))\right)\\
\end{alignat*}
Here, the equality follows from definition of $\bar{\gamma}_{{}_{min}}$, 
the last inclusion follows from ($3$), ($4$) and
\[
\mbox{s-conv}
\Bigl(
\cup_{i=1}^2
\mbox{s-conv}\bigl(\Psi_{N}\circ \alpha_{{}_{N}}^{-1}\circ Id({\rm graph}(\gamma_{{}_{i}}))\bigr) \Bigr)
\]
is the smallest spherical convex set containing 
\[
\cup_{i=1}^2
\mbox{s-conv}\bigl(\Psi_{N}\circ \alpha_{{}_{N}}^{-1}\circ Id({\rm graph}(\gamma_{{}_{i}}))\bigr).
\]
Then by Lemma \ref{inclusionlem}, we conclude that
\[
\bigl(
\mbox{s-conv}\left(\Psi_{N}\circ \alpha_{{}_{N}}^{-1}\circ Id({\rm graph}(\gamma_{{}_{min}}))\right)
\bigr)^\circ
\subset
\bigl(
\mbox{s-conv} \left( \Psi_{N}\circ \alpha_{{}_{N}}^{-1}\circ Id({\rm graph}(\bar{\gamma}_{{}_{min}}))\right)
\bigr)^\circ.
\]
\hfill{$\square$} 
\par
\indent
We are now in the position to show the equality
$
\mathcal{W}_{\gamma_{{}_{min}}}= \bigl(
\mathcal{W}_{\gamma_{{}_{1}}}\cap \mathcal{W}_{\gamma_{{}_{2}}}
\bigr)$
holds for any support functions $\gamma_{{}_1}, \gamma_{{}_2}$. 
\bigskip
\par
\indent
By Lemma \ref{minconvexintegrand}, it is sufficient to prove that 
\[
\mathcal{W}_{\bar{\gamma}_{{}_{min}}}=\mathcal{W}_{\gamma_{{}_{min}}}.
\]
Since $\bar{\gamma}_{{}_1}(\theta) \leq \gamma_{{}_1}(\theta)$ and $\bar{\gamma}_{{}_2}(\theta) \leq \gamma_{{}_2}(\theta)$ for any $\theta\in S^n$, we have taht
\[
\bar{\gamma}_{{}_{min}}(\theta)
=
{\rm min}\{\bar{\gamma}_{1}(\theta), \bar{\gamma}_{2}(\theta)\} 
\leq 
{\rm min}\{\gamma_{1}(\theta), \gamma_{2}(\theta)\}
=
\gamma_{{}_{min}}(\theta).
\]
This implies 
\[
\mathcal{W}_{\bar{\gamma}_{{}_{min}}}\subset \mathcal{W}_{\gamma_{{}_{min}}}.
\] 
So it is sufficient to prove that 
\[
\mathcal{W}_{\gamma_{{}_{min}}} \subset \mathcal{W}_{\bar{\gamma}_{{}_{min}}}.
\]
By Proposition \ref{proposition 6}, this follows from following:
\begin{alignat*}{3}
\mathcal{W}_{\gamma_{{}_{min}}}&= Id^{-1}\circ \alpha_{N}\left( (\Psi_{N}\circ \alpha_{{}_{N}}^{-1}\circ Id({\rm graph}(\gamma_{{}_{min}})))^{\circ}\right)\notag \\
&=  
Id^{-1}\circ \alpha_{N}
\left(\bigl( \mbox{s-conv}\left(\Psi_{N}\circ \alpha_{{}_{N}}^{-1}\circ Id({\rm graph}(\gamma_{{}_{min}}))\right)
\bigr)^\circ\right)\\
&\subset 
Id^{-1}\circ \alpha_{N}\left( 
(\mbox{s-conv} \left( \Psi_{N}\circ \alpha_{{}_{N}}^{-1}\circ Id({\rm graph}(\bar{\gamma}_{{}_{min}}))\right)
\bigr)^\circ
\right)\\
&= 
Id^{-1}\circ \alpha_{N}\bigl( 
( \Psi_{N}\circ \alpha_{{}_{N}}^{-1}\circ Id({\rm graph}(\bar{\gamma}_{{}_{min}})))
)^\circ
\bigr)\\
&= 
\mathcal{W}_{\bar{\gamma}_{{}_{min}}}. 
\end{alignat*}
Here, the second and forth equalities follows from Maehara's lemma, 
the inclusion follows from Lemma \ref{lemma42}.
This proves the Theorem.
\hfill{$\square$} 
\begin{corollary}
Let $\gamma_{{}_{1}}, \dots , \gamma_{{}_{m}}$ be support functions. 
Let $\widetilde{\gamma}_{{}_{min}}: S^{n}\to \mathbb{R}_{+}$ be the function  defined as $\widetilde{\gamma}_{{}_{min}}(\theta)={\rm min}\{\gamma_{{}_1}(\theta), \dots , \gamma_{{}_m}(\theta)\}.$ Then the Wulff shape of $\widetilde{\gamma}_{{}_{min}}$ is the intersection $\mathcal{W}_{\gamma_{{}_{1}}}\cap \dots \cap \mathcal{W}_{\gamma_{{}_{m}}}$.
\end{corollary}
\section{More topics on maximum and minimum of convex integrands}
For given convex integrands (resp. continuous functions) $\gamma_{{}_{1}}$ and $\gamma_{{}_{2}}$, 
by proof of Theorem \ref{maximumoperator} (resp. Theorem \ref{minimumoperator}), 
we have the following relation between  
$\mathcal{DW}_{\gamma_{{}_{1}}}$, $\mathcal{DW}_{\gamma_{{}_{2}}}$ 
and $\mathcal{W}_{\gamma_{{}_{max}}}$
(resp. $\mathcal{DW}_{\gamma_{{}_{1}}}$, $\mathcal{DW}_{\gamma_{{}_{2}}}$ 
and $\mathcal{W}_{\gamma_{{}_{min}}}$):
\[
\mathcal{W}_{\gamma_{{}_{max}}}=\mathcal{D}(\mathcal{DW}_{\gamma_{{}_{1}}}\cap \mathcal{DW}_{\gamma_{{}_{2}}})
\]
\[ 
\left(
\mbox{resp.}\ 
\mathcal{W}_{\gamma_{{}_{min}}}=\mathcal{D}\bigl(\mbox{convex hull of }  (\mathcal{DW}_{\gamma_{{}_{1}}}\cup \mathcal{DW}_{\gamma_{{}_{2}}})\bigr)
\right). 
\]
Since $\mathcal{DW}_{\gamma_{{}_{1}}}\cap \mathcal{DW}_{\gamma_{{}_{2}}}$ is a subset of the
$\mbox{convex hull of } \mathcal{DW}_{\gamma_{{}_{1}}}\cup \mathcal{DW}_{\gamma_{{}_{2}}}$, 
by Lemma \ref{inclusionlem}, it follows that 
\[
\bigl(
\mbox{s-conv }
(\mathcal{D}\widetilde{\mathcal{W}}_{\gamma_{{}_{1}}}\cup \mathcal{D}\widetilde{\mathcal{W}}_{\gamma_{{}_{2}}})
\bigr)^{\circ} 
\subset
(\mathcal{D}\widetilde{\mathcal{W}}_{\gamma_{{}_{1}}}\cap \mathcal{D}\widetilde{\mathcal{W}}_{\gamma_{{}_{2}}})^{\circ}.
\] 
\indent
Moreover, as a corollary, we have the following:
\begin{corollary}
Let $\gamma_{{}_{1}}$ and $\gamma_{{}_{2}}$ be convex integrands. 
Suppose that $\mathcal{W}_{\gamma_{{}_{1}}}$ is the dual Wulff shape of 
$\mathcal{W}_{\gamma_{{}_{2}}}$. 
Then $\mathcal{W}_{\gamma_{{}_{max}}}$ 
is the dual Wulff shape of $\mathcal{W}_{\gamma_{{}_{min}}}$. 
\end{corollary}
\begin{proof}
By Theorem \ref{maximumoperator} and \ref{minimumoperator}, we know that
\begin{alignat*}{3}
\mathcal{W}_{\gamma_{{}_{max}}}
&= 
Id^{-1}\circ \alpha_{{}_{N}}
\bigl( \mbox{s-conv}
(\widetilde{\mathcal{W}}_{\gamma_{{}_{1}}}\cup \widetilde{\mathcal{W}}_{\gamma_{{}_{2}}})
\bigr),
\notag \\
\mathcal{W}_{\gamma_{{}_{min}}}
&=
Id^{-1}\circ \alpha_{{}_{N}}
\bigl(
 \widetilde{\mathcal{W}}_{\gamma_{{}_{1}}}\cap \widetilde{\mathcal{W}}_{\gamma_{{}_{2}}}
\bigr).
  \notag
\end{alignat*}
Since Maehara's lemma implies
 \[
 (\widetilde{\mathcal{W}}_{\gamma_{{}_{1}}}\cup \widetilde{\mathcal{W}}_{\gamma_{{}_{2}}})^{\circ}
 = 
 \bigl(
 \mbox{s-conv}(\widetilde{\mathcal{W}}_{\gamma_{{}_{1}}}\cup \widetilde{\mathcal{W}}_{\gamma_{{}_{2}}})
\bigr)^{\circ}, 
\] 
it is sufficient to prove that 
\[
(\widetilde{W}_{\gamma_{{}_{1}}}\cup \widetilde{W}_{\gamma_{{}_{2}}})^{\circ}
= \bigl(
\widetilde{W}_{\gamma_{{}_{1}}} \cap \widetilde{W}_{\gamma_{{}_{2}}}
\bigr).
 \] 
Let $\widetilde{P}$ be a point of $(\widetilde{W}_{\gamma_{{}_{1}}}\cup \widetilde{W}_{\gamma_{{}_{2}}})^{\circ}$. 
Then it follows that
\[
\bigl(
\widetilde{W}_{\gamma_{{}_{1}}}\cup \widetilde{W}_{\gamma_{{}_{2}}} 
\bigr)
\subset  H(\widetilde{P}).
\] 
This implies $\widetilde{P}$ is a point of $\widetilde{W}_{\gamma_{{}_{1}}}^{\circ}\cap \widetilde{W}_{\gamma_{{}_{2}}}^{\circ}$. 
Since $\widetilde{W}_{\gamma_{{}_{1}}}$ is the dual of $\widetilde{W}_{\gamma_{{}_{2}}}$, namely,
\[
\widetilde{W}_{\gamma_{{}_{1}}}^\circ=\widetilde{W}_{\gamma_{{}_{2}}}, \widetilde{W}_{\gamma_{{}_{1}}}=\widetilde{W}_{\gamma_{{}_{2}}}^\circ,
\]
it follows that 
\[
\widetilde{P}\in 
\bigl(
\widetilde{W}_{\gamma_{{}_{1}}}^{\circ}\cap \widetilde{W}_{\gamma_{{}_{2}}}^{\circ}
\bigr)
=
\bigl(
\widetilde{W}_{\gamma_{{}_{2}}}\cap \widetilde{W}_{\gamma_{{}_{1}}}
\bigr).
\]
Therefore, we conclude that
\[
(\widetilde{W}_{\gamma_{{}_{1}}}\cup \widetilde{W}_{\gamma_{{}_{2}}})^{\circ}
\subset
 \bigl(
\widetilde{W}_{\gamma_{{}_{1}}} \cap \widetilde{W}_{\gamma_{{}_{2}}}
\bigr).
 \] 
\bigskip
\par
On the other hand, since Wulff shapes $\mathcal{W}_{\gamma_{{}_{1}}}, \mathcal{W}_{\gamma_{{}_{2}}}$ are 
duals, we have 
\[\tag{$5$}
\bigl(
\widetilde{W}_{\gamma_{{}_{1}}}\cap \widetilde{W}_{\gamma_{{}_{2}}}
\bigr)
=\bigl(
\widetilde{W}_{\gamma_{{}_{2}}}^\circ\cap \widetilde{W}_{\gamma_{{}_{1}}}^\circ
\bigr).
\]
Let $\widetilde{P}\in \bigl(
\widetilde{W}_{\gamma_{{}_{1}}}\cap \widetilde{W}_{\gamma_{{}_{2}}}
\bigr)$.
By ($5$), it follows that 
\[
\bigl(
\widetilde{W}_{\gamma_{{}_{1}}}\cup \widetilde{W}_{\gamma_{{}_{2}}}
\bigr)
\subset
H(\widetilde{P}).
\]
Then we have 
\[
\widetilde{P}\in (\widetilde{W}_{\gamma_{{}_{1}}}\cup \widetilde{W}_{\gamma_{{}_{2}}})^{\circ}.
\] 
Therefore, it follows that
\[
\bigl(
\widetilde{W}_{\gamma_{{}_{1}}} \cap \widetilde{W}_{\gamma_{{}_{2}}}
\bigr)
\subset
(\widetilde{W}_{\gamma_{{}_{1}}}\cup \widetilde{W}_{\gamma_{{}_{2}}})^{\circ}.
\]
\end{proof}
\section*{Acknowledgements}
The author would like to express his sincere appreciation to Takashi Nishimura, for his kind advice. 
This work was partially supported by
Natural Science Basic Research Program of Shaanxi (Program No. 2020JQ-235)
and the Initial Foundation for Scientific Research
of Northwest A\&F University (Program No. 2452018018).

\medskip

\end{document}